%% file: main.tex
\documentclass{article} 
\usepackage{arxiv_preprint,times}

\input{math_commands.tex}

\usepackage{hyperref}
\usepackage{url}

\usepackage{algorithm}
\usepackage{algpseudocode}
\usepackage{graphicx}
\usepackage{amssymb} 
\usepackage{amsmath}
\usepackage{amsthm}
\usepackage{booktabs}
\usepackage{longtable}

\newtheorem{proposition}{Proposition}

\newtheorem{corollary}{Corollary}

\usepackage{xcolor}
\usepackage{tikz}

\usetikzlibrary{
    arrows.meta,
    positioning,
    calc,
    shapes.geometric
}

\definecolor{arcblue}{RGB}{1,115,178}
\definecolor{arcorange}{RGB}{222,143,5}
\definecolor{arcgreen}{RGB}{2,158,115}
\definecolor{arcred}{RGB}{213,94,0}
\definecolor{arcpurple}{RGB}{204,120,188}

\newcommand{\arccandidate}{%
    \tikz[baseline=-0.55ex]{%
        \fill[gray!65] (0,0) circle (1.05mm);
    }%
}

\newcommand{\arcvalidnonhit}{%
    \tikz[baseline=-0.55ex]{%
        \draw[gray!65,line width=0.45pt]
            (0,0) circle (1.05mm);
    }%
}

\newcommand{\arcinvalid}{%
    \textcolor{gray!65}{\(\times\)}%
}

\newcommand{\arcstar}[1]{%
    \tikz[baseline=-0.55ex]{%
        \node[
            star,
            star points=5,
            star point ratio=2.25,
            fill=#1,
            draw=white,
            line width=0.25pt,
            minimum size=3.4mm,
            inner sep=0pt
        ] {};
    }%
}

\newcommand{\arcscenarios}{%
    \tikz[baseline=-0.45ex]{%
        \draw[
            fill=arcblue!12,
            draw=arcblue,
            line width=0.4pt
        ] (0,0) ellipse (1.4mm and 0.8mm);
        \draw[
            fill=arcorange!12,
            draw=arcorange,
            line width=0.4pt
        ] (2.0mm,0) ellipse (1.4mm and 0.8mm);
        \draw[
            fill=arcpurple!12,
            draw=arcpurple,
            line width=0.4pt
        ] (4.0mm,0) ellipse (1.4mm and 0.8mm);
    }%
}

\newcommand{\arcrisk}{%
    \tikz[baseline=-0.45ex]{%
        \draw[
            fill=arcred!10,
            draw=arcred,
            line width=0.4pt
        ] (0,0) ellipse (1.8mm and 0.9mm);
        \draw[arcred,line width=0.3pt]
            (-1.2mm,-0.55mm) -- (-0.55mm,0.55mm);
        \draw[arcred,line width=0.3pt]
            (-0.35mm,-0.75mm) -- (0.45mm,0.65mm);
        \draw[arcred,line width=0.3pt]
            (0.55mm,-0.55mm) -- (1.15mm,0.45mm);
    }%
}

\newcommand{\archit}{%
    \textcolor{arcgreen}{\(\checkmark\)}%
}

\title{Anchored Scenario Coverage for Failure-Aware First-Hit Batch Inverse Design}

\author{Chuhan Yang, Chenxi Wang, Linhan Wu \& Yuyang Liu \\
DeepVerse\\
\texttt{\{chuhan,chenxi,linhan,yuyang\}@deepverse.tech} 
}

\begin{document}

\maketitle

\begin{abstract}
Early discovery of at least one valid design satisfying a target requirement is a central objective in failure-prone closed-loop inverse design. A natural batch baseline ranks candidates by a product-form marginal valid-hit score,
but selecting the highest-ranked candidates independently can produce redundant recommendations under predictive uncertainty and waste the experiment budget. We introduce ARC-SC(Anchored Risk-Constrained Scenario Coverage), a batch acquisition method that preserves strong marginal candidates as anchors and allocates the remaining batch positions by maximizing complementary coverage over predictive target scenarios under a risk-support constraint. In frozen-oracle closed-loop simulations on superconductivity and JARVIS materials-property benchmarks, ARC-SC yields
a statistically supported improvement in first-hit discovery and remains competitive with directionally favorable first-hit performance on more challenging design space. These results establish ARC-SC as a POF-anchored, scenario-aware batch strategy for improving early valid-target discovery under structured experimental failure.

\end{abstract}

\section{Introduction}
\label{sec:introduction}

Closed-loop inverse design refers to an iterative discovery paradigm in which candidate solutions are sequentially proposed by computational models, evaluated through experiments or simulations, and used to update subsequent design decisions\citep{xue2016accelerated,kusne2020fly,wu2026matformbench}. In many applications, however, the practical objective is not necessarily to identify the global optimum or maximize the terminal best-observed value. The immediate goal is to discover, using as few experimental rounds as possible, at least one valid candidate satisfying a predefined target requirement. This first-hit objective is particularly important when evaluations are expensive, time-consuming, destructive, or performed in parallel batches.

A natural failure-aware strategy evaluates each candidate according to two marginal considerations: how likely it is to satisfy the target requirement and how likely it is to yield a valid experimental outcome. Their product
provides a marginal valid-hit score. A straightforward batch strategy is therefore to rank candidates independently by this score and select the highest-ranked ones. We refer to this top-$B$ marginal strategy as the \textsc{POF} baseline, retaining the method label used in our companion implementation.\footnote{Throughout this paper, \textsc{POF} is used only as the name of the implemented top-$B$ marginal valid-hit baseline. It should not be confused with the conventional probability of feasibility in constrained Bayesian optimization, which corresponds to the validity probability
introduced formally in Section~\ref{sec:methodology}.} If the marginal scores were calibrated valid-hit probabilities and candidate outcomes were conditionally independent, this strategy would maximize the current-round probability of obtaining at least one successful design.

In practice, the independence and calibration assumptions can be unreliable in small-data inverse design at the batch level. Several candidates may receive high marginal scores because they are supported by the same uncertain target-landscape explanation. Selecting all of them can produce a batch with strong individual scores but limited protection against model misspecification: if the shared explanation is incorrect, the entire batch may fail to discover the target. This distinction is particularly consequential for first-hit discovery, for which the value of a batch depends primarily on whether at least one member succeeds rather than on the sum of its marginal candidate utilities.

\paragraph{Related Work and Positioning.}
Batch Bayesian optimization has long recognized that parallel recommendations should not be obtained solely by repeating a sequential acquisition rule, but should account for interactions among batch members. Existing approaches model batch interactions through, for example, simulation matching, jointly maximizing information or value of information, locally penalizing neighborhoods of selected candidates, or explicitly encouraging diversity through determinantal point processes
\citep{azimi2010batch,shah2015parallel,gonzalez2016batch,kathuria2016batched,wu2016parallel}.
These methods establish the importance of batch-level complementarity, but they are generally developed for conventional black-box optimization objectives and do not specifically address valid-target first-hit discovery under unknown experimental failure.

Constrained Bayesian optimization models unknown feasibility and modifies acquisition decisions according to the probability or information value of satisfying the constraints
\citep{gardner2014bayesian,hernandez2015predictive,bergmann2020safe}.
In scientific experimentation, failed evaluations may instead produce no usable target measurement, introducing a second source of difficulty. Prior work has addressed this setting through pessimistic padding of missing responses, learned feasibility classifiers, explicit exploration of feasible--infeasible boundaries, and models of stochastic observation failure
\citep{wakabayashi2022bayesian,tian2024boundary,
hickman2025anubis,iwazaki2025no}.
These methods primarily improve candidate-level feasibility awareness. They do not directly determine whether the valid-hit opportunities represented by simultaneously selected candidates are complementary.

The objective of discovering target members is also related to active search, which allocates queries to identify as many members of a valuable class as possible \citep{garnett2012bayesian,jiang2017efficient,lookman2019active}. Related batch active-learning work uses adaptive submodularity to characterize diminishing returns in parallel information acquisition \citep{chen2013near}. ARC-SC differs in both objective and construction: it targets the first
valid target hit, preserves a strong marginal-score component, and uses submodular predictive-scenario coverage to diversify only the remaining batch positions under risk-support control.

In summary, the resulting gap lies at the intersection of first-hit search, batch complementarity, and failure awareness. A suitable acquisition rule should preserve candidates with a high marginal probability of immediate success, avoid allocating the full batch to candidates supported by overlapping predictive scenarios, and control exposure to candidates with low predicted validity or weak support from the observed design distribution.

We address this problem with ARC-SC, an Anchored Risk-Constrained Scenario Coverage method for failure-aware first-hit batch inverse design. ARC-SC constructs each batch in two stages. It first selects a set of anchors using
the same marginal valid-hit criterion as the \textsc{POF} baseline, subject to ARC-SC's risk-admissibility rule. It then fills the remaining batch positions by maximizing a noisy-OR scenario-coverage objective over multiple predictive target scenarios. Candidate responses are modulated by rank-normalized marginal valid-hit, validity, and empirical-support
information, while a batch-level risk-support constraint limits unsafe or strongly extrapolative recommendations. Scenario completion therefore complements rather than replaces marginal exploitation.

We show that the ARC-SC scenario-coverage objective is normalized, monotone, and submodular. Consequently, conditional on fixed anchors and a fixed admissible completion pool, greedy scenario completion obtains the classical approximation guarantee for cardinality-constrained monotone submodular maximization. We explicitly separate this fixed-domain result from the full implementation, in which the admissible set changes with the partial batch through a dynamic risk constraint.

We evaluate ARC-SC through frozen-oracle closed-loop simulations constructed from the UCI Superconductivity and JARVIS CFID materials-property datasets. Across four controlled failure mechanisms, three invalid-rate levels, three initialization conditions, and paired random seeds, ARC-SC produces more earlier first-hit outcomes than later outcomes relative to ordinary batch POF and increases cumulative valid target discoveries. Its clearest gains occur under feature-interaction and low-validity failures, whereas many hybrid and off-manifold configurations result in ties. The experiments therefore support ARC-SC as a POF-preserving diversification strategy whose benefit is concentrated in settings where structured failure and predictive redundancy matter.

Our contributions are:
\begin{enumerate}
\item We formulate failure-aware batch inverse design around the first valid target hit and propose ARC-SC, which combines marginal valid-hit anchoring, predictive scenario coverage, and batch-level risk-support control.
\item We prove that the scenario-coverage objective is normalized, monotone, and submodular, and derive the corresponding fixed-domain greedy-completion guarantee.
\item We evaluate ARC-SC on two frozen-oracle materials benchmarks under controlled failure mechanisms, showing more earlier first-hit wins than losses relative to ordinary batch POF, with the strongest gains under feature-interaction and low-validity failures.
\end{enumerate}

\section{Methodology}
\label{sec:methodology}

\subsection{Problem Formulation}
We consider a closed-loop batch inverse-design problem in which experimental evaluations are sequentially performed to discover a design satisfying predefined target requirements. At iteration $t$, the optimizer has access to an observed dataset $\mathcal D_t=\{(x_i,y_i,v_i)\}_{i=1}^{n_t}$, where $x_i\in\mathcal X$ denotes a candidate design, $v_i\in\{0,1\}$ indicates whether the experiment returns a valid measurement, $y_i$ denotes the measured target property when $v_i=1$ and is
unavailable when $v_i=0$. In a practical setting, the unavailability of target property could be caused by the candidate not being successfully synthesized, processed, or measured.

The objective is to identify, as early as possible, at least one candidate satisfying both validity and target requirements: $v(x)=1, y(x)\ge T $, for a user-defined target value $T$. Unlike conventional optimization objectives that maximize the final objective value or cumulative discoveries, our goal is first-hit discovery. Given a batch size $B$, the optimizer selects $\mathcal B_t=\{x_{t,1},\ldots,x_{t,B}\}$ at each experimental round. 

ARC-SC is designed to increase the probability of obtaining a valid target hit in the current batch and thereby promote earlier discovery. We use the first-hit round as the primary performance criterion:

$$
\tau_{\mathrm{hit}}=\min\{t:\exists x\in\mathcal B_t,v(x)=1,y(x)\ge T\},
$$

with the convention $\tau_{\mathrm{hit}}=\infty$ if no valid target hit is observed. Since experiments are conducted in batches, the utility of a candidate cannot be evaluated independently: multiple candidates with high individual acquisition scores may still provide limited probability of discovering a successful design if they represent the same uncertain region of the design space. ARC-SC addresses this issue by explicitly optimizing batch-level complementary coverage under predictive uncertainty. 

\subsection{Anchored Risk-Constrained Scenario Coverage}

ARC-SC decomposes batch construction into two complementary selection stages. The first stage preserves candidates with high marginal POF scores, while the second allocates the remaining batch positions to candidates that cover complementary predictive scenarios.

The resulting batch is constructed as $\mathcal B_t=\mathcal B_{\mathrm{anchor}}\cup\mathcal B_{\mathrm{cover}},$ where $\mathcal B_{\mathrm{anchor}}$ contains high-confidence candidates selected using POF, and $\mathcal B_{\mathrm{cover}}$ contains candidates selected according to posterior scenario coverage.

The overall workflow of ARC-SC is depicted in Fig~\ref{fig:algo-workflow} and the individual components are described below.

\begin{figure*}[t]
\centering

\resizebox{0.98\textwidth}{!}{%
\begin{tikzpicture}[
    font=\small,
    >=Latex,
    workflowbox/.style={
        draw=gray!70,
        rounded corners=4pt,
        line width=0.8pt,
        fill=gray!3,
        text width=3.25cm,
        minimum height=4.05cm,
        inner sep=6pt
    },
    visualbox/.style={
        workflowbox,
        align=center
    },
    textworkflowbox/.style={
        workflowbox,
        align=center
    },
    stepnum/.style={
        circle,
        fill=gray!65,
        text=white,
        minimum size=7.5mm,
        inner sep=0pt,
        font=\bfseries
    },
    flow/.style={
        -{Latex[length=2.8mm,width=1.8mm]},
        line width=1pt,
        draw=gray!70
    }
]

\node[textworkflowbox] (n1) at (-6.9,0) {
    \textbf{Learn from Previous Experiments}\\[7pt]
    {\color{gray!65}\Large
    \(\circ\quad\times\quad\circ\quad\times\)}\\[4pt]
    {\scriptsize Valid and failed experimental outcomes}\\[6pt]
    \rule{2.35cm}{0.35pt}\\[5pt]
    {\scriptsize
    Fit the target and validity models and update predictive
    uncertainty}
};

\node[stepnum]
at ([xshift=-1mm,yshift=1mm]n1.north west) {1};

\node[visualbox] (n2) at (-2.30,2.55) {
    \textbf{Generate and Score Candidates}\\[2pt]
    \includegraphics[
        width=2.68cm
    ]{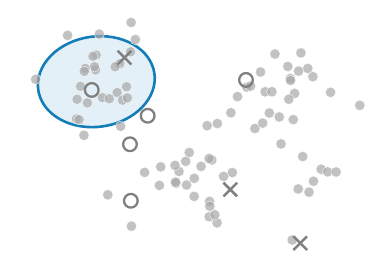}\\[-1pt]
    {\scriptsize
    Construct a finite pool and identify the current
    high-promise region}
};

\node[stepnum]
at ([xshift=-1mm,yshift=1mm]n2.north west) {2};

\node[visualbox] (n3) at (2.30,2.55) {
    \textbf{Keep Strong Marginal Anchors}\\[2pt]
    \includegraphics[
        width=2.68cm
    ]{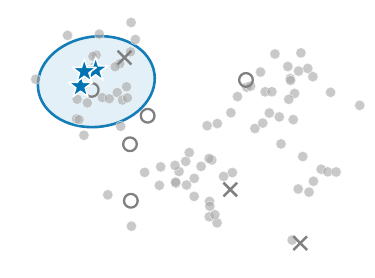}\\[-1pt]
    {\scriptsize
    Preserve candidates with strong marginal valid-hit
    potential}
};

\node[stepnum]
at ([xshift=-1mm,yshift=1mm]n3.north west) {3};

\node[visualbox] (n4) at (6.90,2.55) {
    \textbf{Construct Predictive Scenarios}\\[2pt]
    \includegraphics[
        width=2.68cm
    ]{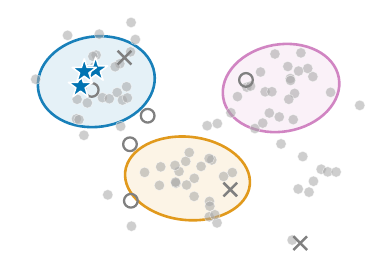}\\[-1pt]
    {\scriptsize
    Represent alternative plausible locations of the target
    opportunity}
};

\node[stepnum]
at ([xshift=-1mm,yshift=1mm]n4.north west) {4};

\node[visualbox] (n5) at (6.90,-2.55) {
    \textbf{Measure Scenario Coverage}\\[2pt]
    \includegraphics[
        width=2.68cm
    ]{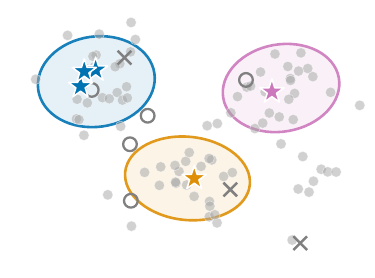}\\[-1pt]
    {\scriptsize
    Add candidates representing scenarios not already covered
    by the anchors}
};

\node[stepnum]
at ([xshift=-1mm,yshift=1mm]n5.north west) {5};

\node[visualbox] (n6) at (2.30,-2.55) {
    \textbf{Build a Risk-Aware Batch}\\[2pt]
    \includegraphics[
        width=2.68cm
    ]{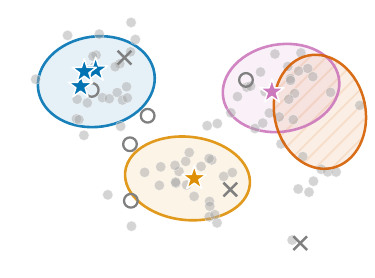}\\[-1pt]
    {\scriptsize
    Retain complementary candidates under the validity-support
    risk budget}
};

\node[stepnum]
at ([xshift=-1mm,yshift=1mm]n6.north west) {6};

\node[visualbox] (n7) at (-2.30,-2.55) {
    \textbf{Evaluate and Update}\\[2pt]
    \includegraphics[
        width=2.68cm
    ]{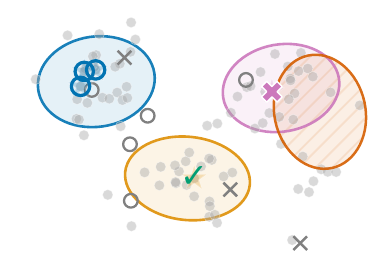}\\[-1pt]
    {\scriptsize
    Observe valid hits, valid non-hits, and failures, then
    append the outcomes}
};

\node[stepnum]
at ([xshift=-1mm,yshift=1mm]n7.north west) {7};

\draw[flow]
    (n1.north east)
    to[out=35,in=180]
    (n2.west);

\draw[flow]
    (n2.east) -- (n3.west);

\draw[flow]
    (n3.east) -- (n4.west);

\draw[flow]
    (n4.south) -- (n5.north);

\draw[flow]
    (n5.west) -- (n6.east);

\draw[flow]
    (n6.west) -- (n7.east);

\draw[flow]
    (n7.west)
    to[out=180,in=-35]
    (n1.south east);

\end{tikzpicture}%
}

\vspace{1.5mm}

{\scriptsize
\setlength{\tabcolsep}{7pt}
\renewcommand{\arraystretch}{1.25}
\begin{tabular}{lll}
\arccandidate\ Candidate
&
\arcvalidnonhit\ Previous valid non-hit
&
\arcinvalid\ Previous invalid
\\
\arcstar{arcblue}\ POF anchor selection
&
\arcstar{arcorange}\hspace{-0.7mm}
\arcstar{arcpurple}\ Scenario-coverage selection
&
\arcscenarios\ Predictive target scenarios
\\
\arcrisk\ Predicted high-risk region
&
\archit\ Valid target hit
&
\(\circ/\times\)\ Experimental non-hit / invalid outcome
\end{tabular}
}

\caption{
Integrated schematic overview of ARC-SC. At each closed-loop iteration, target and validity models are updated from previous valid and failed experiments and used to score a finite candidate pool. Strong marginal valid-hit candidates are retained through POF-style risk-admissible anchoring. Multiple predictive target scenarios then expose alternative plausible opportunities, and the remaining batch positions are assigned to complementary candidates under a validity-support risk budget. Experimental evaluation reveals valid target hits, valid non-hits, and invalid outcomes, which are
appended to the observed data for the next iteration. The embedded design-space illustrations are schematic and do not represent a particular experimental run.
}
\label{fig:algo-workflow}

\end{figure*}

\subsection{Candidate Modeling and Risk Estimation}
At each iteration, ARC-SC first constructs a finite candidate pool $\mathcal C_t=\{x_j\}_{j=1}^{N}\subseteq\mathcal X$, over which the acquisition procedure is evaluated. All candidate-level
quantities and batch-selection decisions described below are computed on $\mathcal C_t$. For notational simplicity, we write $\mathcal C$ when the iteration index is clear from context. The candidate-generation procedure and its experimental configuration are described in Appendix.

\paragraph{Target model.}
The target surrogate is trained using only experimentally valid observations $\mathcal D_t^{y}=\{(x_i,y_i):v_i=1\}$. For each candidate $x$, the surrogate provides a predictive distribution over the unknown target property $p(y(x)|x,\mathcal D_t)$. The probability that a candidate satisfies the target threshold is

\begin{equation}
p_y(x)=P(y(x)\ge T|x,\mathcal D_t).
\end{equation}

This quantity is the surrogate-estimated posterior probability that $x$ satisfies the target threshold.

\paragraph{Validity model.}
Failed experiments provide binary information about the unknown feasible region. ARC-SC therefore trains a validity model using both successful and failed evaluations $\mathcal D_t^{v}=\{(x_i,v_i)\}_{i=1}^{n_t}$. The model estimates

\begin{equation}
p_v(x)=P(v(x)=1|x,\mathcal D_t).
\end{equation}

The quantity $p_v(x)$ is the predicted validity probability; in the terminology of constrained Bayesian optimization, it corresponds to the probability of feasibility.

\paragraph{Marginal valid-hit score and POF baseline.}
A successful discovery requires a candidate to both yield a valid measurement and satisfy the target requirement. We therefore define the product-form marginal valid-hit score
\begin{equation}
h(x)=p_y(x)p_v(x).
\label{eq:valid-hit-score}
\end{equation}
Under conditional independence between target satisfaction and validity, $h(x)$ equals the corresponding joint posterior probability; more generally, we use it as a marginal acquisition score.

Our primary comparison method, denoted \textsc{POF}, constructs a batch by selecting the $B$ candidates with the largest marginal valid-hit scores:
\begin{equation}
\mathcal B_{\mathrm{POF}}
=\operatorname{TopB}_{x\in\mathcal C} h(x).
\label{eq:pof-baseline}
\end{equation}
Thus, \textsc{POF} is an independent marginal-ranking baseline: each batch position is selected according to $h(x)$ without explicitly accounting for redundancy among simultaneously selected candidates.

\paragraph{Empirical support estimation.}
Validity prediction alone may be unreliable in small-data regimes, especially for candidates far outside the observed design distribution. ARC-SC therefore incorporates an empirical support score. Let $d_{\mathrm{kNN}}(x)$ denote the average distance between candidate $x$ and its nearest observed designs in normalized feature space. We define $p_s(x)=\exp(-\frac{d_{\mathrm{kNN}}(x)^2}{2\tau_s^2})$, where $\tau_s$ controls the decay scale. The support score is high for candidates located near previously observed designs and decreases for extrapolative candidates.

\paragraph{Risk-support score.}
A candidate has low risk when it is both predicted to be valid and sufficiently supported by the observed design distribution. We define the candidate-level risk-support score as

$$
c(x)=1-p_v(x)p_s(x).
$$

For a nonempty partial batch $\mathcal S$, ARC-SC evaluates the average batch risk:

$$
C(\mathcal S)=\frac{1}{|\mathcal S|}\sum_{x\in \mathcal S}c(x).
$$

During batch construction, ARC-SC enforces $C(\mathcal S)\le R$ whenever a risk-feasible extension exists, and $R$ is a user-defined risk budget. If no feasible extension is
available, a penalized fallback rule is used to complete the batch.

\subsection{POF-Anchored Candidate Selection}

A central design choice in ARC-SC is that scenario coverage does not replace marginal exploitation. ARC-SC uses the same marginal valid-hit score $h(x)$ that defines the \textsc{POF} baseline to select its first $K$ batch members, while additionally enforcing ARC-SC's risk-admissibility rule. We refer to these candidates as \emph{POF anchors}.

Specifically, define
$$
\mathcal F_R(\mathcal S)
=\left\{
x\in\mathcal C\setminus\mathcal S:
C(\mathcal S\cup\{x\})\le R
\right\}.
$$

Starting from $\mathcal S_0^{\mathrm a}=\varnothing$, ARC-SC select 
$$
a_k\in\arg\max_{x\in\mathcal F_R(\mathcal S_{k-1}^{\mathrm a})}h(x),\quad
\mathcal S_k^{\mathrm a}=\mathcal S_{k-1}^{\mathrm a}\cup\{a_k\},
$$

for $k=1,\ldots,K$, whenever a feasible extension exists. We define $\mathcal B_{\mathrm{anchor}}=\mathcal S_K^{\mathrm a}$. These anchors preserve candidates with the strongest marginal probability of immediate success.

Because anchor selection is subject to the dynamic risk constraint, the POF anchors are not necessarily identical to the first $K$ members of $\mathcal B_{\mathrm{POF}}$. Rather, they preserve the same marginal selection criterion within the currently risk-admissible set.

The motivation is that when the predictive model is reliable, marginal valid-hit ranking may already provide strong first-hit performance. Completely replacing this exploitation component with uncertainty-driven diversification may therefore be counterproductive. POF anchoring preserves strong marginal opportunities while reserving the remaining batch capacity for scenario-based diversification.

\subsection{Predictive Scenario Construction}
After selecting anchors, ARC-SC models uncertainty in the unknown target landscape through multiple predictive scenarios. Ideally, one would sample complete posterior functions $f^{(1)},...,f^{(M)}\sim P(f|\mathcal D_t)$. For computational scalability, the current implementation constructs
marginal posterior-predictive scenarios on the finite candidate pool:

\begin{equation}
f^{(m)}(x_j)=\mu_j+\sigma_{y,j}z_{m,j}, \quad z_{m,j}\overset{\mathrm{iid}}{\sim}\mathcal N(0,1),
\end{equation}

where $\mu_j=\mu(x_j)$ and $\sigma_{y,j}=\sigma_y(x_j)$ denote the predictive mean and standard deviation returned by the target
surrogate for each $x_j\in\mathcal C$. These scenarios preserve the marginal predictive uncertainty at each candidate but do not explicitly retain cross-candidate posterior covariance. Further implementation details are provided in Appendix.

For scenario $m$, a candidate's target response is measured relative to a relaxed threshold

$$
T_m^{\mathrm{relax}}=\min\left(T,Q_{1-\pi}\{f^{(m)}(x):x\in\mathcal C\}\right),
$$

where $Q_{1-\pi}$ is the empirical $(1-\pi)$-quantile of sampled target values in scenario m, and $\pi\in(0,1)$. The relaxation avoids degenerate scenarios where no candidate appears promising early in the optimization process. The soft target response is then defined as

$$
\psi_m(x)=\operatorname{sigm}\left(\frac{f^{(m)}(x)-T_m^{\mathrm{relax}}}{\tau}\right),
$$

where $\operatorname{sigm}(u)=\frac{1}{1+\exp(-u)}$ is the logistic function, and $\tau>0$.

\subsection{Scenario Coverage Acquisition}

The central idea of ARC-SC is that a batch should not only contain individually promising candidates, but should cover multiple plausible predictive scenarios. For each scenario $m$, we define a candidate response\footnote{Although validity and support appear in both the scenario response and
the risk score, they play distinct roles: the rank gates attenuate the scenario utility of weak candidates, whereas $C(S)$ controls the aggregate risk exposure of the selected batch.}:

\begin{equation}
r_m(x)=g_h(x)g_v(x)g_s(x)\psi_m(x).
\end{equation}

To reduce sensitivity to probability miscalibration, we transform the marginal valid-hit, validity, and support scores into percentile-rank gates. Let $R_h(x)$, $R_v(x)$, and $R_s(x)$ denote the percentile ranks of $h(x)$, $p_v(x)$, and $p_s(x)$ within $\mathcal C$, respectively. We define
\begin{equation}
g_q(x)=\left[\epsilon_q+(1-\epsilon_q)R_q(x)\right]^{\alpha_q}, \quad q\in\{h,v,s\}.
\end{equation}
The floor $\epsilon_q>0$ prevents complete elimination of low-ranked candidates, while $\alpha_q$ controls the strength of the corresponding gate. Exact parameter settings are provided in
Appendix.

For a candidate batch $\mathcal B$, ARC-SC defines the scenario coverage objective:

\begin{equation}
A_{\mathrm{SC}}(\mathcal B)
=\frac1M\sum_{m=1}^{M}\left[1-\prod_{x\in \mathcal B}
(1-r_m(x))\right].
\label{eq:theory-scenario-coverage}
\end{equation}

For each predictive scenario $m$, $1-\prod_{x\in \mathcal B}(1-r_m(x))$ measures whether the batch contains at least one candidate with a strong response in that scenario. Consequently, redundant candidates provide diminishing additional value. If a selected candidate already covers scenario $m$, another candidate with a similar response contributes little additional coverage. The marginal gain of adding candidate $x$ to an existing partial batch $\mathcal S$ is

\begin{equation}
\Delta_{\mathrm{SC}}(x|\mathcal S)=\frac1M\sum_{m=1}^{M}r_m(x)\prod_{s\in \mathcal S}(1-r_m(s)).
\end{equation}

The marginal-gain expression admits a direct interpretation:
$r_m(x)$ measures the strength of candidate $x$ under scenario $m$, whereas $\prod_{s\in \mathcal S}(1-r_m(s))$ is the scenario mass not yet covered by the current partial batch.

\subsection{Risk-Constrained Greedy Batch Optimization}
Direct optimization of $A_{\mathrm{SC}}(\mathcal B)$ is combinatorial. ARC-SC therefore constructs the batch greedily: After selecting the anchor set $\mathcal S=\mathcal B_{\mathrm{anchor}}$, the remaining candidates are selected sequentially. At each step\footnote{If no candidate satisfies the risk budget, ARC-SC invokes the risk-penalized fallback rule defined in Appendix.}:

$$
x^\star\in\arg\max_{x\in\mathcal F_R(\mathcal S)}\Delta_{\mathrm{SC}}(x\mid \mathcal S).
$$

 The selected candidate is added $\mathcal S\leftarrow \mathcal S\cup\{x^\star\}$. This process continues until $|\mathcal S|=B$. Specifically, $\mathcal S_0=\mathcal B_{\mathrm{anchor}}.$ For $\ell=1,\ldots,B-K$, we select $x_\ell^\star\in\arg\max_{x\in\mathcal F_R(\mathcal S_{\ell-1})}\Delta_{\mathrm{SC}}(x\mid\mathcal S_{\ell-1})$ and update $\mathcal S_\ell=\mathcal S_{\ell-1}\cup\{x_\ell^\star\}$. Then we define $\mathcal B_{\mathrm{cover}}
=\{x_1^\star,\ldots,x_{B-K}^\star\}$,
$\mathcal B_t=\mathcal S_{B-K}.$

The ARC-SC algorithm is summarized as Algorithm \ref{alg:arc-sc}.

\begin{algorithm}[t]
\caption{ARC-SC at closed-loop iteration $t$}
\label{alg:arc-sc}
\begin{algorithmic}[1]
\Require Observed data $\mathcal D_t$, candidate pool $\mathcal C$,
batch size $B$, number of anchors $K$, number of scenarios $M$,
risk budget $R$
\Ensure Recommended batch $\mathcal B_t$

\State Fit the target surrogate on $\mathcal D_t^{y}$ and the validity
model on $\mathcal D_t^{v}$
\State Compute $p_y(x)$, $p_v(x)$, $p_s(x)$, $h(x)$, and $c(x)$
for all $x\in\mathcal C$
\State Compute rank gates $g_h(x)$, $g_v(x)$, and $g_s(x)$

\State Initialize $\mathcal S\gets\varnothing$

\Comment{Stage 1: POF anchors}
\For{$k=1,\ldots,K$}
    \State $\mathcal F_R(\mathcal S)\gets
    \{x\in\mathcal C\setminus\mathcal S:
    C(\mathcal S\cup\{x\})\le R\}$
    \State Select $a_k$ by maximizing $h(x)$ over
    $\mathcal F_R(\mathcal S)$; use the risk-penalized fallback if
    $\mathcal F_R(\mathcal S)=\varnothing$
    \State $\mathcal S\gets\mathcal S\cup\{a_k\}$
\EndFor
\State $\mathcal B_{\mathrm{anchor}}\gets\mathcal S$

\Comment{Predictive scenario construction}
\State Generate $\{f^{(m)}\}_{m=1}^{M}$ on $\mathcal C$
\State Compute $T_m^{\mathrm{relax}}$, $\psi_m(x)$, and
$r_m(x)$ for all $m$ and $x\in\mathcal C$

\Comment{Stage 2: scenario-coverage completion}
\For{$\ell=1,\ldots,B-K$}
    \State Compute $\Delta_{\mathrm{SC}}(x\mid\mathcal S)$
    for all $x\in\mathcal C\setminus\mathcal S$
    \State $\mathcal F_R(\mathcal S)\gets
    \{x\in\mathcal C\setminus\mathcal S:
    C(\mathcal S\cup\{x\})\le R\}$
    \State Select $x_\ell^\star$ by maximizing
    $\Delta_{\mathrm{SC}}(x\mid\mathcal S)$ over
    $\mathcal F_R(\mathcal S)$; use the risk-penalized fallback if
    $\mathcal F_R(\mathcal S)=\varnothing$
    \State $\mathcal S\gets\mathcal S\cup\{x_\ell^\star\}$
\EndFor

\State $\mathcal B_{\mathrm{cover}}\gets
\mathcal S\setminus\mathcal B_{\mathrm{anchor}}$
\State \Return $\mathcal B_t\gets
\mathcal B_{\mathrm{anchor}}\cup\mathcal B_{\mathrm{cover}}$
\end{algorithmic}
\end{algorithm}

\section{Theoretical Analysis}
\label{sec:theory}
In this section our analysis concerns the one-round batch-selection problem. We analyze ARC-SC at a fixed closed-loop iteration, conditioning on the observed dataset and the finite candidate pool. The predictive scenarios, percentile-rank gates, and candidate responses are therefore treated as fixed during construction of the current batch. In particular, for every candidate $x\in\mathcal C$ and predictive scenario $m\in\{1,\ldots,M\}$, the scenario response defined in Section~\ref{sec:methodology} satisfies $0\le r_m(x)\le 1$.

\subsection{From Marginal Valid-Hit Selection to Batch Scenario Coverage}

For a candidate $x$, let $H_x=\{v(x)=1\}\cap\{y(x)\ge T\}$ denote the event that $x$ is a valid target hit. The ideal current-round utility of a batch $\mathcal B$ is the posterior probability that at least one member of the batch succeeds:
\begin{equation}
F(\mathcal B)=\Pr\left(\bigcup_{x\in\mathcal B}H_x
\,\middle|\,\mathcal D_t\right).
\end{equation}

Suppose that the marginal valid-hit score $h(x)=p_y(x)p_v(x)$ is a calibrated marginal probability of $H_x$ and that the hit events are conditionally independent across candidates. Under these assumptions,
\begin{equation}
F(\mathcal{B})=1-\prod_{x\in\mathcal{B}}\bigl(1-h(x)\bigr).
\label{eq:independent-hit}
\end{equation}
For a fixed batch size $B$, \eqref{eq:independent-hit} is maximized by selecting the $B$ largest values of $h(x)$, exactly recovering the \textsc{POF} baseline defined in~\eqref{eq:pof-baseline}.

In small-data inverse design, however, several candidates with high marginal valid-hit scores may be supported by similar predictive assumptions. Their marginal scores can therefore be large while their batch-level contributions
are redundant. ARC-SC addresses this regime through the scenario-coverage surrogate \eqref{eq:theory-scenario-coverage}.
Because $r_m(x)$ includes a relaxed target response and rank-normalized gates, $A_{\mathrm{SC}}$ is not asserted to be a calibrated estimate of $F$. Rather, it is a bounded batch utility designed to reward coverage of multiple predictive scenarios while discounting repeated coverage of the same scenarios.

\subsection{Submodularity of Scenario Coverage}

The following proposition gives the central structural property of ARC-SC.

\begin{proposition}
\label{prop:submodularity}
The set function
$$
A_{\mathrm{SC}}:2^{\mathcal C}\rightarrow[0,1]
$$
defined in \eqref{eq:theory-scenario-coverage} is normalized, monotone, and submodular.
\end{proposition}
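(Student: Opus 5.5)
The plan is to reduce everything to a single scenario and then average. For fixed $m$, define
$$
\phi_m(\mathcal B)=1-\prod_{x\in\mathcal B}\bigl(1-r_m(x)\bigr),
$$
so that $A_{\mathrm{SC}}=\frac1M\sum_{m=1}^M\phi_m$. Normalization, monotonicity and submodularity are each preserved under nonnegative linear combinations. It therefore suffices to prove the three properties for each $\phi_m$. The range claim $A_{\mathrm{SC}}(\mathcal B)\in[0,1]$ follows from $0\le r_m(x)\le1$, which is assumed at the start of Section~\ref{sec:theory}. That assumption gives $0\le 1-r_m(x)\le1$, so every product $\prod_{x\in\mathcal B}(1-r_m(x))$ lies in $[0,1]$.

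\textbf{Normalization.} The empty product equals $1$. Hence $\phi_m(\varnothing)=0$, and so $A_{\mathrm{SC}}(\varnothing)=0$.

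\textbf{Monotonicity.} For $\mathcal S\subseteq\mathcal C$ and $x\notin\mathcal S$, I would write out the marginal gain explicitly:
$$
\phi_m(\mathcal S\cup\{x\})-\phi_m(\mathcal S)=r_m(x)\prod_{s\in\mathcal S}\bigl(1-r_m(s)\bigr)=:r_m(x)\,U_m(\mathcal S).
$$
Averaging this over $m$ gives exactly the expression $\Delta_{\mathrm{SC}}(x\mid\mathcal S)$ stated in Section~\ref{sec:methodology}. Both factors are nonnegative, so every single-element gain is $\ge0$. For $\mathcal S\subseteq\mathcal T$, I would add the elements of $\mathcal T\setminus\mathcal S$ one at a time and sum these nonnegative gains, which gives $A_{\mathrm{SC}}(\mathcal S)\le A_{\mathrm{SC}}(\mathcal T)$.

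\textbf{Submodularity.} I would use the diminishing-returns form: for $\mathcal S\subseteq\mathcal T\subseteq\mathcal C$ and $x\in\mathcal C\setminus\mathcal T$, show $\Delta(x\mid\mathcal S)\ge\Delta(x\mid\mathcal T)$. By the gain formula, this reduces to showing $U_m(\mathcal S)\ge U_m(\mathcal T)$. That inequality holds because
$$
U_m(\mathcal T)=U_m(\mathcal S)\prod_{s\in\mathcal T\setminus\mathcal S}\bigl(1-r_m(s)\bigr),
$$
and the extra factor lies in $[0,1]$. Multiplying by $r_m(x)\ge0$ and averaging over $m$ completes the argument.

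There is no real obstacle here. The one point requiring care is that the bounds $0\le r_m(x)\le1$ are used essentially, in two places: nonnegativity of the gains, and the factor $\prod(1-r_m)\le1$. I would note that these bounds hold because $r_m(x)$ is a product of the gates $g_q\in(0,1]$ and the sigmoid $\psi_m\in(0,1)$. I would also note that the case $x\in\mathcal T$ is excluded by the standard definition of submodularity. Finally, I would remark that $\phi_m$ is the probability that at least one of several independent events with probabilities $r_m(x)$ occurs, i.e.\ a probabilistic-coverage function. This identifies the result with a known class of submodular functions, but the direct computation above is self-contained.
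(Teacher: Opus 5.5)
Your proof is correct and follows the paper's argument essentially step for step: normalization from the empty-product convention, monotonicity from the nonnegativity of the marginal gain $r_m(x)\prod_{s\in\mathcal S}(1-r_m(s))$, and diminishing returns from $\prod_{s\in\mathcal S}(1-r_m(s))\ge\prod_{s\in\mathcal T}(1-r_m(s))$, multiplied by $r_m(x)\ge 0$ and averaged over $m$. Your additions are small refinements the paper leaves implicit rather than a different route: telescoping single-element gains to get monotonicity between arbitrary nested sets, checking the $[0,1]$ range, and verifying $r_m(x)\in[0,1]$ from the gates and the sigmoid.
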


\begin{proof}
Normalization follows from the empty-product convention $A_{\mathrm{SC}}(\varnothing)=0$. For a partial batch $\mathcal S\subseteq\mathcal C$ and a candidate
$x\in\mathcal C\setminus\mathcal S$, the marginal gain is

\begin{equation}
\begin{aligned}
\Delta_{\mathrm{SC}}(x \mid \mathcal{S})
=A_{\mathrm{SC}}\bigl(\mathcal{S}\cup\{x\}\bigr)-A_{\mathrm{SC}}(\mathcal{S})
=\frac{1}{M}\sum_{m=1}^{M}r_m(x)\prod_{s\in\mathcal{S}}
\bigl(1-r_m(s)\bigr).
\end{aligned}
\label{eq:theory-marginal}
\end{equation}

Since all factors in \eqref{eq:theory-marginal} are nonnegative, $\Delta_{\mathrm{SC}}(x\mid\mathcal S)\ge0$,
which establishes monotonicity.

Now consider $\mathcal S\subseteq\mathcal T\subseteq\mathcal C$ and $x\in\mathcal C\setminus\mathcal T$. Because $1-r_m(s)\in[0,1]$,
$$
\prod_{s\in\mathcal S}\bigl(1-r_m(s)\bigr)\ge
\prod_{s\in\mathcal T}\bigl(1-r_m(s)\bigr).
$$
Multiplying by $r_m(x)\ge0$ and averaging over $m$ gives
$$
\Delta_{\mathrm{SC}}(x\mid\mathcal S)
\ge
\Delta_{\mathrm{SC}}(x\mid\mathcal T),
$$
which is the diminishing-returns condition for submodularity.
\end{proof}

Proposition~\ref{prop:submodularity} holds for any fixed response matrix whose entries satisfy $r_m(x)\in[0,1]$. It therefore does not require the predictive scenarios to be exact samples from a correlated Gaussian-process posterior. The scenario-generation procedure affects the statistical interpretation of the objective, but not its submodular structure.

Equation~\ref{eq:theory-marginal} also explains how ARC-SC reduces batch redundancy. Define the uncovered mass of scenario $m$ after selecting $\mathcal S$ as
$U_m(\mathcal S)=\prod_{s\in\mathcal S}\bigl(1-r_m(s)\bigr)$.
Then
\begin{equation}
\Delta_{\mathrm{SC}}(x\mid\mathcal S)
=\frac{1}{M}\sum_{m=1}^{M}U_m(\mathcal S)r_m(x).
\end{equation}
A candidate receives high marginal value when it responds strongly in scenarios that remain weakly represented by the current batch. Once a scenario has been strongly covered, its uncovered mass becomes small, and additional candidates with a similar response profile provide little further gain.

For two candidates $x$ and $z$, this redundancy discount can be written explicitly as
$$
A_{\mathrm{SC}}(\{x,z\})=\bar r(x)+\bar r(z)-O(x,z),
$$
where $\bar r(x)=\frac{1}{M}\sum_{m=1}^{M}r_m(x)$ and $O(x,z)=\frac{1}{M}\sum_{m=1}^{M}r_m(x)r_m(z)$.
The overlap term $O(x,z)$ is large when the two candidates respond strongly in the same predictive scenarios. Thus, ARC-SC induces complementarity in predictive-scenario response space without requiring an explicit pairwise geometric-repulsion term between selected candidates. 

\subsection{Guarantee for Anchored Greedy Completion}

Let $\mathcal A=\mathcal B_{\mathrm{anchor}}$
be a fixed set of $K$ POF anchors, and let
$k=B-K$
be the number of remaining batch positions. Conditional on $\mathcal A$, define the residual coverage utility
\begin{equation}
 G_{\mathcal A}(\mathcal S)=A_{\mathrm{SC}}(\mathcal A\cup\mathcal S)-A_{\mathrm{SC}}(\mathcal A).
\label{eq:residual-coverage}   
\end{equation}
The subtraction ensures that $G_{\mathcal A}(\varnothing)=0$.

\begin{proposition}
\label{prop:residual-submodularity}
For any fixed anchor set $\mathcal A$, the residual objective
$G_{\mathcal A}$ is normalized, monotone, and submodular with respect to $\mathcal S$.
\end{proposition}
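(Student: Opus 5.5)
The plan is to reduce everything to Proposition~\ref{prop:submodularity}, using the standard fact that conditioning a normalized monotone submodular function on a fixed set preserves all three properties. The natural domain for $G_{\mathcal A}$ is $2^{\mathcal C\setminus\mathcal A}$, since completion only selects candidates outside the anchors. The argument also goes through on $2^{\mathcal C}$: elements of $\mathcal A$ contribute zero marginal gain, because $\mathcal A\cup\mathcal S\cup\{x\}=\mathcal A\cup\mathcal S$ for $x\in\mathcal A$.

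\emph{Normalization.} By definition, $G_{\mathcal A}(\varnothing)=A_{\mathrm{SC}}(\mathcal A)-A_{\mathrm{SC}}(\mathcal A)=0$.

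\emph{Marginal gains.} Next I compute the marginal gain of $G_{\mathcal A}$ directly. For $x\notin\mathcal A\cup\mathcal S$, the constant $A_{\mathrm{SC}}(\mathcal A)$ cancels, giving
\[
G_{\mathcal A}(\mathcal S\cup\{x\})-G_{\mathcal A}(\mathcal S)=\Delta_{\mathrm{SC}}(x\mid \mathcal A\cup\mathcal S)
=\frac1M\sum_{m=1}^{M}r_m(x)\,U_m(\mathcal A)\prod_{s\in\mathcal S}\bigl(1-r_m(s)\bigr).
\]

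\emph{Monotonicity.} This marginal gain is nonnegative by Proposition~\ref{prop:submodularity}, or equivalently because every factor lies in $[0,1]$.

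\emph{Submodularity.} Take $\mathcal S\subseteq\mathcal T$ and $x\notin\mathcal A\cup\mathcal T$. Then $\mathcal A\cup\mathcal S\subseteq\mathcal A\cup\mathcal T$, so the diminishing-returns inequality of Proposition~\ref{prop:submodularity} gives $\Delta_{\mathrm{SC}}(x\mid\mathcal A\cup\mathcal S)\ge\Delta_{\mathrm{SC}}(x\mid\mathcal A\cup\mathcal T)$. This is exactly diminishing returns for $G_{\mathcal A}$. An alternative is to observe that $G_{\mathcal A}$ is itself a noisy-OR coverage function with scenario weights $U_m(\mathcal A)/M\ge0$. The argument of Proposition~\ref{prop:submodularity} carries over verbatim to nonnegative scenario weights, which gives a second self-contained proof.

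The only real obstacle is bookkeeping about the domain. If $x\in\mathcal A$ or $x\in\mathcal T\setminus\mathcal S$, the expression $\Delta_{\mathrm{SC}}(x\mid\cdot)$ with the product formula would double-count $x$. I will handle this by restricting to $x\in\mathcal C\setminus(\mathcal A\cup\mathcal T)$, or by noting that the gain is defined as a set difference and equals $0$ in those cases. Submodularity then follows: when $x\in\mathcal T\setminus\mathcal S$, the left side is at least $0$ and the right side is undefined or zero, and the standard definition only requires $x\notin\mathcal T$. I will also note that the codomain of $G_{\mathcal A}$ is $[0,1-A_{\mathrm{SC}}(\mathcal A)]$, which suffices for the subsequent greedy $(1-1/e)$ guarantee.
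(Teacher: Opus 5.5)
Your proposal is correct and follows the paper's own argument. The marginal gain of $G_{\mathcal A}$ at $\mathcal S$ is the marginal gain of $A_{\mathrm{SC}}$ at $\mathcal A\cup\mathcal S$, so monotonicity and diminishing returns carry over from Proposition~\ref{prop:submodularity}, and subtracting the constant $A_{\mathrm{SC}}(\mathcal A)$ gives normalization. One minor point: the factorization $U_m(\mathcal A)\prod_{s\in\mathcal S}(1-r_m(s))$ and the weighted noisy-OR view both require $\mathcal S\cap\mathcal A=\varnothing$, so they hold on the domain $2^{\mathcal C\setminus\mathcal A}$ rather than on all of $2^{\mathcal C}$.
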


\begin{proof}
Adding the same fixed set $\mathcal A$ to every argument preserves both monotonicity and diminishing marginal returns of $A_{\mathrm{SC}}$. Subtracting the constant
$A_{\mathrm{SC}}(\mathcal A)$ establishes normalization without changing any marginal gain.
\end{proof}

Consider a fixed admissible completion pool $\mathcal F\subseteq\mathcal C\setminus\mathcal A$, and assume that
$1\le k=B-K\le|\mathcal F|.$ Let $\mathcal S_{\mathrm{greedy}}$ denote the set of $k$ candidates
obtained by repeatedly selecting an exact maximum-marginal-gain candidate from $\mathcal F$. Let
$\mathcal S^\star\in\arg\max_{\substack{
\mathcal S\subseteq\mathcal F\\|\mathcal S|\le k
}}G_{\mathcal A}(\mathcal S)$ be an optimal completion of the same anchor set over the same fixed candidate pool.

\begin{corollary}[Greedy anchored completion]
\label{cor:greedy-completion}
For fixed anchors and a fixed admissible completion pool,
\begin{equation}
\begin{aligned}
G_{\mathcal A}(\mathcal S_{\mathrm{greedy}})
\ge\left[
1-\left(1-\frac{1}{k}\right)^k
\right]
G_{\mathcal A}(\mathcal S^\star)
\ge
\left(1-\frac{1}{e}\right)
G_{\mathcal A}(\mathcal S^\star).
\end{aligned}
\label{eq:greedy-guarantee}
\end{equation}
\end{corollary}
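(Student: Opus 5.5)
The argument is the classical Nemhauser--Wolsey--Fisher analysis, applied to $G_{\mathcal A}$ restricted to the ground set $\mathcal F$. By Proposition~\ref{prop:residual-submodularity}, $G_{\mathcal A}$ is normalized, monotone, and submodular. Restricting the domain to subsets of the fixed pool $\mathcal F$ keeps all three properties, since they are stated pointwise over pairs of subsets. Because the pool does not change with the partial batch, the greedy rule is exactly the standard greedy algorithm on $(\mathcal F, G_{\mathcal A})$ under a cardinality constraint $k$. Monotonicity also lets us assume $|\mathcal S^\star|=k$ without loss of generality.

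Let $\mathcal S_i$ be the greedy set after $i$ steps, with $\mathcal S_0=\varnothing$, and let $\delta_i=G_{\mathcal A}(\mathcal S^\star)-G_{\mathcal A}(\mathcal S_i)$. The key one-step inequality follows from three facts. By monotonicity, $G_{\mathcal A}(\mathcal S^\star)\le G_{\mathcal A}(\mathcal S_i\cup\mathcal S^\star)$. Adding the elements of $\mathcal S^\star\setminus\mathcal S_i$ one at a time and using submodularity bounds each increment by its marginal gain at $\mathcal S_i$. Each such gain is at most the greedy gain $\Delta^{(i)}=G_{\mathcal A}(\mathcal S_{i+1})-G_{\mathcal A}(\mathcal S_i)$, because those elements lie in $\mathcal F\setminus\mathcal S_i$ and greedy picks an exact maximizer there. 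Together these give
\begin{equation*}
G_{\mathcal A}(\mathcal S^\star)-G_{\mathcal A}(\mathcal S_i)\le\sum_{x\in\mathcal S^\star\setminus\mathcal S_i}\Delta_{G}(x\mid\mathcal S_i)\le k\,\Delta^{(i)},
\end{equation*}
that is, $\delta_{i+1}\le(1-1/k)\delta_i$. The marginal gains of $G_{\mathcal A}$ coincide with $\Delta_{\mathrm{SC}}(\cdot\mid\mathcal A\cup\mathcal S_i)$, so the greedy step in the paper is this exact maximizer. One technical point: if $|\mathcal F\setminus\mathcal S_i|$ is small but at least $k-i\ge1$ elements remain, the argmax is well defined because $k\le|\mathcal F|$.

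Iterating from $\delta_0=G_{\mathcal A}(\mathcal S^\star)$, which uses normalization, gives $\delta_k\le(1-1/k)^k G_{\mathcal A}(\mathcal S^\star)$. Rearranging yields the first inequality in \eqref{eq:greedy-guarantee}. The second follows from $(1-1/k)^k\le e^{-1}$, which is $\log(1-u)\le -u$ with $u=1/k$, together with $G_{\mathcal A}(\mathcal S^\star)\ge0$.

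The only real obstacle is conceptual rather than computational. The guarantee is valid only because $\mathcal F$ is held fixed. In the implemented algorithm, $\mathcal F_R(\mathcal S)$ shrinks or changes with $\mathcal S$, which breaks the step "each element of $\mathcal S^\star$ is available to greedy." I would therefore state explicitly that the corollary concerns the fixed-pool greedy only, and not the dynamic risk-constrained procedure. Exactness of the argmax matters as well: an approximate maximizer would degrade the constant.
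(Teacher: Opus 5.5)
Your proposal is correct and follows essentially the same route as the paper's appendix derivation. Both use the standard Nemhauser--Wolsey--Fisher gap recursion $\delta_{i+1}\le(1-1/k)\delta_i$, obtained from monotonicity, submodularity and exact greedy maximization over the fixed pool $\mathcal F$, and iterate it from $\delta_0=G_{\mathcal A}(\mathcal S^\star)$. Your extra remarks, namely $G_{\mathcal A}(\mathcal S^\star)\ge 0$ for the final $1-1/e$ step and the identification of the residual marginal gains with $\Delta_{\mathrm{SC}}(\cdot\mid\mathcal A\cup\mathcal S_i)$, are small details the paper leaves implicit.
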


\begin{proof}
By Proposition~\ref{prop:residual-submodularity},
$G_{\mathcal A}$ is a normalized, monotone, submodular set function. The result follows from the classical greedy approximation guarantee for monotone submodular maximization under a cardinality constraint \citep{nemhauser1978analysis}. A complete derivation is provided in Appendix.
\end{proof}

Corollary~\ref{cor:greedy-completion} gives a conditional
interpretation of the two-stage ARC-SC construction. Once the POF anchors are fixed, greedy scenario completion approximates the optimal residual scenario coverage achievable by the remaining $B-K$ positions
over the same fixed completion pool. The anchors preserve strong marginal opportunities, whereas the completion stage reduces scenario-response redundancy conditional on those anchors.

The guarantee is conditional on a fixed admissible completion pool. In the implemented ARC-SC procedure, the admissible set changes with the current partial batch through the average risk-support constraint, and a penalized fallback is used when no risk-feasible extension exists.
Consequently, Corollary~\ref{cor:greedy-completion} does not constitute a global approximation guarantee for the full dynamically risk-filtered algorithm. Further analysis is provided in Appendix.

\section{Experiments and Results}
\label{sec:experiments}

\subsection{Experimental Setup}
\label{sec:experimental-setup}

\paragraph{Materials benchmarks and frozen oracles.}
We evaluate ARC-SC on two materials-property benchmarks: the UCI Superconductivity dataset \citep{superconductivty_data_464} and the JARVIS CFID dataset \citep{choudhary2020joint}. The superconductivity task searches for designs with high critical temperature, whereas the JARVIS task searches for low formation energy per atom. For JARVIS, we negate the formation energy so that larger response values are preferred in both benchmarks.

Because closed-loop optimization can recommend candidates that are not rows of the original datasets, we construct a deterministic frozen response oracle for each benchmark. Original dataset labels are used only for oracle construction and validation; all subsequent closed-loop evaluations, including newly generated candidates, use the frozen oracle response. Table~\ref{tab:benchmark-summary} summarizes the benchmark environments. Data preprocessing, descriptor selection, oracle training, and held-out validation are detailed in Appendix~\ref{app:oracle-construction}.

\begin{table}[t]
\caption{Summary of the frozen-oracle benchmark environments. The JARVIS response is negative formation energy per atom, such that larger values are preferred.}
\label{tab:benchmark-summary}
\centering
\resizebox{\linewidth}{!}{%
\begin{tabular}{lrrrrr}
\toprule
\multicolumn{1}{c}{\bfseries DATASET}
& \multicolumn{1}{c}{\bfseries SAMPLES}
& \multicolumn{1}{c}{\bfseries FEATURES}
& \multicolumn{1}{c}{\bfseries THRESHOLD}
& \multicolumn{1}{c}{\bfseries ORACLE $R^2$}
& \multicolumn{1}{c}{\bfseries SPEARMAN $\rho$}
\\
\hline
Superconductivity
& 21,263
& 5
& 80.0
& 0.913
& 0.956
\\
JARVIS CFID
& 55,723
& 15
& 1.539
& 0.838
& 0.912
\\
\bottomrule
\end{tabular}%
}
\end{table}

The superconductivity oracle is an Extra-Trees regressor trained on five selected composition descriptors and achieves $R^2=0.913$ and Spearman $\rho=0.956$ on held-out data. The JARVIS oracle is a random-forest regressor trained on 15 selected CFID descriptors and achieves $R^2=0.838$ and $\rho=0.912$. The superconductivity target corresponds to the upper $15\%$ of the frozen-oracle response distribution. For JARVIS, the upper-$25\%$ threshold is used because more extreme thresholds produce an almost entirely no-hit benchmark.

\paragraph{Failure mechanisms.}
We augment each frozen oracle with four controlled invalid-output mechanisms: feature interaction, smoothly varying low validity, off-manifold failure, and a hybrid mechanism combining multiple sources of invalidity. Each mechanism is calibrated to nominal invalid rates of $0.2$, $0.3$, and $0.4$. Exact definitions and calibration procedures are given in Appendix~\ref{app:failure-mechanisms}.

\paragraph{Closed-loop protocol.} Each run begins with $n_0=50$ observations drawn from the bottom $65\%$, $75\%$, or $85\%$ of the corresponding frozen-oracle response distribution. We then perform five closed-loop rounds with batch size $B=5$, yielding a total budget of 25 newly evaluated candidates. ARC-SC and the \textsc{POF} baseline use identical target-surrogate and validity-model classes under the same experimental configuration. \textsc{POF} selects all $B=5$ candidates by descending marginal valid-hit score $h(x)=p_y(x)p_v(x)$. ARC-SC uses the same score to select $K=3$
risk-admissible POF anchors and allocates the remaining two batch positions through scenario-coverage optimization. To assess stochastic variability, we use ten held-out seed IDs ($5$--$14$). For each benchmark, we evaluate four failure mechanisms, three invalid-rate levels, and three initialization settings, yielding $4 \times 3 \times 3 \times 10 = 360$  paired evaluation cases. Uniform sampling over descriptor bounds is used for all superconductivity experiments and for the JARVIS feature-interaction and low-validity mechanisms\footnote{In the 15-dimensional JARVIS descriptor space, combining global box sampling with the off-manifold and hybrid failure mechanisms creates a nearly degenerate search regime, we therefore use a closed-loop empirical trust-region generator. See detailed explain in Appendix}. For the standard uniform-pool protocol, a single candidate pool of $N=10{,}000$ designs is generated for each configuration and round and supplied to both ARC-SC and POF, ensuring that differences arise solely from the acquisition strategy.

\paragraph{Evaluation and statistical analysis.}
Our primary metric is the first closed-loop round containing a newly queried valid target hit. Runs without a hit during the five-round horizon are assigned a censored value of six,
\begin{equation}
\widetilde{\tau}_{\mathrm{hit}}=
\begin{cases}
\tau_{\mathrm{hit}}, & \text{if a hit is found},\\
6, & \text{otherwise}.
\end{cases}
\label{eq:censored-first-hit}
\end{equation}
We report the paired difference
\begin{equation}
\Delta\widetilde{\tau}_{\mathrm{hit}}=\widetilde{\tau}_{\mathrm{hit}}^{\mathrm{ARC\text{-}SC}}-\widetilde{\tau}_{\mathrm{hit}}^{\mathrm{POF}},
\end{equation}
for which negative values favor ARC-SC. A paired case is counted as a win, tie, or loss according to whether this difference is negative, zero, or positive.

To quantify uncertainty, we report two complementary statistics. First, an exact two-sided sign test is applied to discordant first-hit pairs, testing whether ARC-SC wins and losses occur with equal probability. Second, we construct $95\%$ confidence intervals for paired effects using $10{,}000$ hierarchical bootstrap replicates that resample experimental-condition cells and then seed-level pairs within each sampled cell. As secondary endpoints, we report cumulative valid target hits and the invalid-evaluation rate. 

\subsection{Overall First-Hit Performance}
\label{sec:overall-results}

Table~\ref{tab:confirmatory-results} summarizes the aggregate paired results. On superconductivity, ARC-SC obtains 70 earlier first-hit outcomes, 251 ties, and 39 later outcomes. Among the 109 discordant pairs, ARC-SC wins $64.2\%$ of the comparisons, with an exact $95\%$ confidence interval of $[54.5\%,73.2\%]$ and a two-sided sign-test value of $p=0.004$. The mean paired censored first-hit effect is $\Delta\widetilde{\tau}_{\mathrm{hit}}=-0.269$, with hierarchical-bootstrap $95\%$ confidence interval $[-0.539,-0.025].$ Thus, both the win--loss imbalance and the average first-hit effect favor ARC-SC.

\begin{table}[t]
\centering
\caption{First-hit comparison against the \textsc{POF} baseline using ten seeds per condition. W/T/L denotes earlier-hit wins, ties, and later-hit losses. The conditional win fraction is computed over discordant pairs only. $\Delta\widetilde{\tau}_{\mathrm{hit}}<0$ favor ARC-SC. Intervals are $95\%$ confidence intervals.}
\label{tab:confirmatory-results}
\resizebox{\linewidth}{!}{
\begin{tabular}{lcccc}
\toprule
\bfseries Dataset
& \bfseries W/T/L
& \bfseries Cond. win frac.
& \bfseries Sign $p$
& $\Delta\widetilde{\tau}_{\mathrm{hit}}$\\
\midrule
Superconductivity
& 70/251/39
& $0.642\,[0.545,0.732]$
& $0.004$
& $-0.269\,[-0.539,-0.025]$\\
JARVIS CFID
& 49/272/39
& $0.557\,[0.447,0.663]$
& $0.337$
& $-0.117\,[-0.297,0.053]$\\
\bottomrule
\end{tabular}
}
\end{table}

The superconductivity improvement arises through both forms of first-hit advantage. ARC-SC finds a target when POF fails to find one within the budget in 41 cases, compared with 23 cases in the opposite direction; when both methods eventually succeed but in different rounds, ARC-SC is earlier in 29 cases and later in 16. ARC-SC also reduces the invalid-evaluation rate by an average of 5.9 percentage points, with a bootstrap interval entirely below zero. The cumulative number of valid target hits increases by 102 over the full grid, corresponding to a mean paired increase of $0.283$ hits per run; its $95\%$ interval $[-0.064,0.667]$ includes zero, indicating that the clearest supported benefit is first-hit timing rather than cumulative discovery yield.

On the mixed-pool JARVIS benchmark, ARC-SC obtains 49 earlier first-hit outcomes, 272 ties, and 39 later outcomes. The corresponding mean first-hit effect, $\Delta\widetilde{\tau}_{\mathrm{hit}}=-0.117$, $95\%\ \mathrm{CI}=[-0.297,0.053],$
is directionally favorable but statistically inconclusive. Likewise, the exact sign test does not reject equal win and loss probabilities. ARC-SC nevertheless remains competitive across the 360 cases, producing 18 additional valid target hits in aggregate and a small mean reduction in invalid evaluations; both secondary-effect intervals include zero. We therefore interpret JARVIS as evidence of robustness rather than statistically established superiority.

\subsection{Discovery Probability Across Experimental Rounds}
\label{sec:discovery-curves}

Figure~\ref{fig:confirmatory-first-hit-curves} shows the probability that a run has obtained at least one newly queried valid target hit by each experimental round. This visualization directly reflects the first-hit objective rather than cumulative optimization performance.

\begin{figure}[h]
\centering
\includegraphics[width=\linewidth]{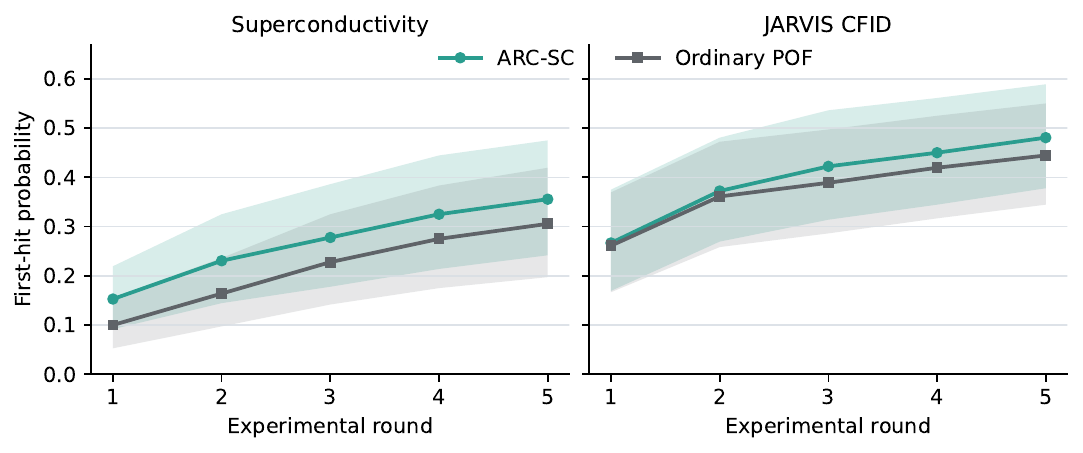}
\caption{First-hit probability as a function of experimental round. Lines show ARC-SC and \textsc{POF} baseline; shaded regions denote hierarchical-bootstrap $95\%$ confidence intervals. Left: superconductivity with shared uniform candidate pools. Right: JARVIS with the mixed candidate-pool protocol. The results show that ARC-SC improves early target discovery on superconductivity with statistically significant gains in first-hit performance. On JARVIS, ARC-SC remains competitive. Although it does not achieve statistically significant superiority, it demonstrates robustness in the more challenging high-dimensional design space.}
\label{fig:confirmatory-first-hit-curves}
\end{figure}

For superconductivity, ARC-SC separates from POF immediately: the round-1 discovery probabilities are $15.3\%$ and $10.0\%$, respectively, and by round 2 they are $23.1\%$ and $16.4\%$. The corresponding paired differences are $+5.3$ and $+6.7$ percentage points, with bootstrap intervals excluding zero. The ARC-SC curve remains numerically above POF throughout the five-round horizon, reaching $35.6\%$ compared with $30.6\%$ at round 5.

The JARVIS curves are closer. ARC-SC reaches a first-hit probability of $48.1\%$ by round 5, compared with $44.4\%$ for POF. The differences become progressively positive after the first round but their confidence intervals include zero. Importantly, the mixed candidate-pool protocol substantially reduces the degenerate no-hit behavior observed with global box sampling, allowing the off-manifold and hybrid mechanisms to contribute meaningful closed-loop comparisons rather than deterministic ties.

\subsection{Behavior Across Failure Mechanisms}
\label{sec:failure-results}

The experiments also reveal that the value of scenario coverage depends on the failure structure. On superconductivity, the clearest advantage occurs under feature-interaction failures: ARC-SC records 37 wins, 33 ties, and 20 losses across 90 paired cases, while producing 98 additional valid target hits and reducing the invalid rate by 13.5 percentage points on average. Under low-validity failures, ARC-SC retains a favorable first-hit balance of 26/45/19 and again reduces invalid evaluations, although cumulative-hit performance is essentially unchanged. Hybrid and off-manifold conditions are dominated by ties, indicating that the benefit of scenario completion is concentrated in regimes where multiple plausible target opportunities remain accessible.

For JARVIS, the mixed candidate-pool experiment produces nondegenerate comparisons across all four mechanisms. Under the trust-region protocol, ARC-SC obtains 10/73/7 first-hit W/T/L under off-manifold failure and 7/75/8 under the hybrid mechanism, while reducing invalid evaluations by approximately 10.1 and 5.2 percentage points, respectively. Under the shared uniform pool, feature-interaction and low-validity mechanisms yield 15/62/13 and 17/62/11 first-hit W/T/L. These mechanism-level results are descriptive rather than individually powered hypothesis tests.

\section{Conclusion}
\label{sec:conclusion}

We introduced ARC-SC, an anchored risk-constrained scenario-coverage method for first-hit batch inverse design under experimental failure. Rather than constructing a batch solely from marginal candidate scores, ARC-SC uses the
same marginal valid-hit criterion as the POF baseline to retain strong risk-admissible anchors, then allocates the remaining batch capacity to complementary predictive target scenarios. This construction preserves marginal exploitation while reducing redundancy when predictive uncertainty admits multiple plausible opportunities for target discovery.

We showed that the ARC-SC scenario-coverage objective is normalized, monotone, and submodular. Consequently, for fixed anchors and a fixed admissible completion pool, greedy scenario completion inherits the classical approximation guarantee for cardinality-constrained submodular maximization. The result formalizes the diminishing-return structure underlying scenario coverage: candidates are valuable when they respond strongly in predictive scenarios that remain insufficiently represented by the current batch. 

Holdout-seed experiments on frozen-oracle superconductivity and JARVIS materials benchmarks provide complementary evidence for the proposed approach. On superconductivity, ARC-SC achieves a statistically supported reduction in censored first-hit round relative to the POF baseline, and simultaneously reduces invalid evaluations, reproducing the favorable behavior observed during development on multiple independent runs. On the higher-dimensional JARVIS benchmark, ARC-SC remains competitive and directionally favorable in first-hit performance, but less statistically significant. Overall, these results support ARC-SC as a POF-preserving diversification strategy whose clearest benefit arises when meaningful alternative target opportunities remain accessible under predictive uncertainty, rather than as a uniformly superior replacement for marginal acquisition.

Several directions follow naturally from these results. First, batch diversification may be better treated as an adaptive decision rather than a fixed design choice. More faithful posterior-scenario modeling, together with adaptive control of the anchor count and risk budget, could allow the degree of diversification to respond to model confidence and estimated redundancy among high-scoring candidates. This direction also connects to a broader view of scientific modeling in which uncertainty should be propagated coherently across interacting components rather than treated locally within individual predictions \citep{wang2024bayesian}. For closed-loop discovery, this suggests jointly reasoning over target uncertainty, experimental validity, empirical support, and batch composition when deciding how much exploration is warranted. Second, the JARVIS experiments indicate that candidate generation and acquisition should not be viewed as independent components: when a high-dimensional proposal space is dominated by unsupported designs, candidate reachability can become more limiting than the acquisition rule itself. This observation connects scenario-aware selection with adaptive local and subspace-search methods \citep{eriksson2019scalable,papenmeier2022increasing}, and motivates policies that jointly adapt the reachable search region, risk allocation, and batch composition. Finally, the first-hit formulation points toward a broader objective for autonomous experimentation: optimizing the scientific discovery process under finite budgets, experimental failures, and parallel capacity, rather than only the terminal objective value \citep{garnett2012bayesian,volk2024performance,tom2024self}. Broader baseline comparisons and real closed-loop studies will be important for determining when such adaptive discovery policies provide the greatest practical benefit.

\bibliography{references}
\bibliographystyle{references}

\appendix

\section{Appendix}

\subsection{Implementation and Reproducibility}
\label{app:implementation}

This section gives the implementation details needed to reproduce the acquisition rules used in Section~\ref{sec:experiments}. We distinguish the shared predictive infrastructure from the batch-selection rules. ARC-SC and the \textsc{POF} baseline use the same target surrogate, validity model, observed data, target threshold, and---for the uniform-pool experiments---the same finite candidate pool. The \textsc{POF} baseline is the method defined in~\eqref{eq:pof-baseline}: it selects the $B$ largest marginal valid-hit scores $h(x)=p_y(x)p_v(x)$ and does not apply ARC-SC's support-based risk filter. ARC-SC instead uses $h(x)$ for risk-admissible anchoring and uses scenario coverage for the remaining positions.

\subsubsection{Uniform Candidate-Pool Construction}
\label{app:candidate-generation}

For the uniform-pool experiments, bounds are inferred independently for every selected descriptor from the complete benchmark reference feature matrix. If $\ell_j$ and $u_j$ are the observed minimum and maximum of descriptor $j$, the padded search interval is
\begin{equation}
\left[
\ell_j-\rho_b(u_j-\ell_j),\;
u_j+\rho_b(u_j-\ell_j)
\right],
\qquad \rho_b=0.05.
\label{eq:app-main-bounds}
\end{equation}
At every round, $N=10{,}000$ candidates are sampled independently and uniformly from this axis-aligned box. In a paired uniform-pool case, the pool is sampled once using the condition- and round-specific seed and is supplied unchanged to ARC-SC and \textsc{POF}. Thus, the two methods are compared on exactly the same candidates. This uniform protocol is used for all superconductivity experiments and for the JARVIS feature-interaction and low-validity mechanisms.

\subsubsection{Closed-Loop JARVIS Trust-Region Candidate Pool}
\label{app:jarvis-trust-region}

In the 15-dimensional JARVIS descriptor space, global uniform sampling becomes strongly dominated by unsupported candidates under the off-manifold and hybrid failure mechanisms. For these two mechanisms in the main mixed-pool JARVIS experiment, we therefore generate candidates from a closed-loop empirical trust region.

At the start of each round, let $\mathcal O_t$ denote the rows observed by the current method and let $\mathcal O_t^{v}\subseteq\mathcal O_t$ denote its valid rows. Candidate anchors are sampled uniformly with replacement from $\mathcal O_t^{v}$ whenever at least one valid row is available; otherwise all observed rows are used. For descriptor $j$, let $\operatorname{IQR}_{t,j}$ be the interquartile range among the currently observed designs. The local perturbation scale is
\begin{equation}
s_{t,j}
=
0.05\,\widetilde{\operatorname{IQR}}_{t,j},
\end{equation}
where a degenerate observed IQR is replaced first by the corresponding IQR of the benchmark reference features and, if necessary, by $10^{-3}$ times the full reference range. Given a sampled anchor $a$, a candidate is generated as
\begin{equation}
x_j=a_j+\epsilon_j,
\qquad
\epsilon_j\sim\mathcal N(0,s_{t,j}^2),
\label{eq:app-trust-region}
\end{equation}
then clipped to the unpadded minimum and maximum of the benchmark reference features. Duplicate candidate rows are removed.

The trust-region pool uses only feature values and observed validity outcomes available to that method at the start of the round. It does not use held-out target labels, oracle values for unqueried candidates, future observations, or observations generated by the competing method. Consequently, the trust-region pools are trajectory dependent and need not be identical between ARC-SC and \textsc{POF}. These cases therefore compare complete closed-loop policies rather than two acquisition rules on a fixed common pool. This distinction is explicit in our interpretation of the JARVIS results.

\subsubsection{Target and Validity Models}
\label{app:predictive-models}

At every round, the target surrogate is refitted using only valid observations. The reported experiments use a Gaussian-process regressor after standardizing the input descriptors. Its kernel is
\begin{equation}
k(x,x')
=
C\,k_{\mathrm{Mat\acute{e}rn},\nu=5/2}(x,x')
+
k_{\mathrm{white}}(x,x'),
\end{equation}
where the constant-kernel amplitude has bounds $[10^{-3},10^3]$, the initial Mat\'ern length scale is one, and the white-noise level is initialized at $10^{-3}$. The regressor normalizes the target internally and uses two optimizer restarts. Predictive means and standard deviations are denoted by $\mu(x)$ and $\sigma_y(x)$.

The validity model is fitted to all observed valid/invalid labels. The experiments use a random-forest classifier with 300 trees, square-root feature subsampling, balanced-subsample class weighting, and the condition-specific random seed. If only one validity class has been observed, the implementation uses a constant-probability model until both classes are available. ARC-SC and \textsc{POF} use the same model classes and model seeds within each paired case.

\subsubsection{Empirical-Support Score}
\label{app:support-score}

Let $\widetilde x$ denote a candidate after standardizing each descriptor using the mean and standard deviation of the currently observed designs. Let $\mathcal N_k(x)$ be its $k$ nearest observed designs in Euclidean distance. With $k=5$,
\begin{equation}
d_{\mathrm{kNN}}(x)
=
\frac{1}{k}
\sum_{x_i\in\mathcal N_k(x)}
\|\widetilde x-\widetilde x_i\|_2.
\label{eq:app-knn-distance}
\end{equation}
The scale $\tau_s$ is recomputed at each round as the empirical $75$th percentile of the corresponding five-neighbor distances evaluated on the observed designs, with a numerical lower bound of $10^{-6}$. The support score is
\begin{equation}
p_s(x)
=
\exp\!\left[
-\frac{d_{\mathrm{kNN}}(x)^2}{2\tau_s^2}
\right].
\end{equation}
This is a relative empirical-support score rather than a calibrated probability.

\subsubsection{Predictive Scenarios, Rank Gates, and Risk Fallback}
\label{app:scenario-sampling}

The implementation can draw correlated Gaussian-process samples for small candidate pools. Automatic correlated sampling is enabled only when the candidate pool contains at most 256 candidates. Since all reported experiments use $N=10{,}000$, they use independent marginal draws
\begin{equation}
f^{(m)}(x_j)
=
\mu(x_j)+\sigma_y(x_j)z_{m,j},
\qquad
z_{m,j}\overset{\mathrm{iid}}{\sim}\mathcal N(0,1),
\end{equation}
for $m=1,\ldots,M$ and $j=1,\ldots,N$. These samples preserve candidate-wise predictive means and variances but not cross-candidate posterior covariance.

For the greater-than targets used here,
\begin{equation}
T_m^{\mathrm{relax}}
=
\min\!\left\{
T,\;
Q_{1-\pi}
\bigl(\{f^{(m)}(x):x\in\mathcal C\}\bigr)
\right\},
\qquad
\pi=0.05,
\end{equation}
and
\begin{equation}
\psi_m(x)
=
\operatorname{sigm}
\left(
\frac{f^{(m)}(x)-T_m^{\mathrm{relax}}}{\tau}
\right).
\end{equation}
The effective soft-response temperature in all reported runs is $\tau=0.05$.

For a score vector $q(x)$ over a pool of size $N$, average ranks are used for ties, the best candidate receives rank percentile one, and the worst receives $1/N$. The gates are
\begin{equation}
g_q(x)
=
\left[
\epsilon_q+(1-\epsilon_q)R_q(x)
\right]^{\alpha_q},
\qquad
q\in\{h,v,s\},
\end{equation}
with
\begin{equation}
\epsilon_h=\epsilon_v=\epsilon_s=0.05,
\qquad
\alpha_h=1,\quad
\alpha_v=0.5,\quad
\alpha_s=0.5.
\end{equation}

For a partial batch $\mathcal S$ and candidate $x$, define the risk violation
\begin{equation}
V_R(x\mid\mathcal S)
=
\left[C(\mathcal S\cup\{x\})-R\right]_+.
\end{equation}
If a risk-feasible extension exists, ARC-SC maximizes the stage-specific desirability over the feasible extensions: $h(x)$ for POF anchoring and $\Delta_{\mathrm{SC}}(x\mid\mathcal S)$ for scenario completion. If none exists, it selects
\begin{equation}
x^\star
\in
\arg\max_{x\in\mathcal C\setminus\mathcal S}
\left\{
D(x\mid\mathcal S)
-
\lambda_R V_R(x\mid\mathcal S)
\right\},
\qquad
\lambda_R=10.
\label{eq:app-risk-fallback}
\end{equation}
The fallback ensures that a full batch can always be returned, but it means that $R$ is enforced whenever a feasible extension exists rather than as an unconditional terminal-batch guarantee.

\subsubsection{Hyperparameters Used in the Experiments}
\label{app:hyperparameters}

\begin{table}[t]
\centering
\caption{Algorithm and evaluation settings. Dataset-specific entries are shown explicitly.}
\label{tab:app-hyperparameters}
\begin{tabular}{lll}
\toprule
Quantity & Superconductivity & JARVIS CFID\\
\midrule
Initial observations $n_0$ & 50 & 50\\
Batch size $B$ / rounds & 5 / 5 & 5 / 5\\
POF anchors $K$ & 3 & 3\\
Candidate-pool size $N$ & 10,000 & 10,000\\
Predictive scenarios $M$ & 64 & 64\\
Target quantile & 0.85 & 0.75\\
Risk budget $R$ & 0.40 & 0.80\\
Uniform bounds padding $\rho_b$ & 0.05 & 0.05\\
Support neighbors $k$ & 5 & 5\\
Support-scale quantile & 0.75 & 0.75\\
Relaxation fraction $\pi$ & 0.05 & 0.05\\
Effective temperature $\tau$ & 0.05 & 0.05\\
Rank floors $(\epsilon_h,\epsilon_v,\epsilon_s)$ & $(.05,.05,.05)$ & $(.05,.05,.05)$\\
Rank exponents $(\alpha_h,\alpha_v,\alpha_s)$ & $(1,.5,.5)$ & $(1,.5,.5)$\\
Fallback coefficient $\lambda_R$ & 10 & 10\\
Correlated-sampling limit & 256 & 256\\
Seed IDs & 5--14 & 5--14\\
Main candidate pools & uniform & mixed; see Appendix~\ref{app:jarvis-trust-region}\\
Trust-region noise scale & -- & $0.05\times$ observed IQR\\
\bottomrule
\end{tabular}
\end{table}

The JARVIS risk budget $R=0.80$ is a dataset-specific operating setting held fixed across the reported JARVIS grids.

\subsection{Proof of Greedy-Completion Guarantee}
\label{app:theory}

Throughout this section, the observed dataset, finite candidate pool, predictive scenarios, and response matrix are conditioned upon and treated as fixed.

\subsubsection{Greedy-Approximation Derivation}
\label{app:greedy-proof}

Let $\mathcal A$ be a fixed anchor set and let $k=B-K\ge 1$. Over a fixed admissible completion pool $\mathcal F\subseteq\mathcal C\setminus\mathcal A$, define
\begin{equation}
G_{\mathcal A}(\mathcal S)
=A_{\mathrm{SC}}(\mathcal A\cup\mathcal S)
-A_{\mathrm{SC}}(\mathcal A).
\end{equation}
As established in Proposition~\ref{prop:residual-submodularity}, $G_{\mathcal A}$ is normalized, monotone, and submodular. Let $\mathcal S_i$ be the greedy completion after $i$ selections, with $\mathcal S_0=\varnothing$, and let
\begin{equation}
\mathcal S^\star\in
\arg\max_{\substack{\mathcal S\subseteq\mathcal F\\|\mathcal S|\le k}}
G_{\mathcal A}(\mathcal S)
\end{equation}
be an optimal completion.

At step $i$, monotonicity and submodularity imply
\begin{align}
G_{\mathcal A}(\mathcal S^\star)-G_{\mathcal A}(\mathcal S_{i-1})
&\le
G_{\mathcal A}(\mathcal S^\star\cup\mathcal S_{i-1})
-G_{\mathcal A}(\mathcal S_{i-1})\\
&\le
\sum_{x\in\mathcal S^\star\setminus\mathcal S_{i-1}}
\Delta_{\mathcal A}(x\mid\mathcal S_{i-1}),
\label{eq:app-gap-sum}
\end{align}
where $\Delta_{\mathcal A}$ is the marginal gain of the residual objective. Since $|\mathcal S^\star|\le k$, at least one candidate in the sum has marginal gain no smaller than
\begin{equation}
\frac{G_{\mathcal A}(\mathcal S^\star)
-G_{\mathcal A}(\mathcal S_{i-1})}{k}.
\end{equation}
The exact greedy candidate has at least this marginal gain, and hence
\begin{equation}
G_{\mathcal A}(\mathcal S_i)
-G_{\mathcal A}(\mathcal S_{i-1})
\ge
\frac{G_{\mathcal A}(\mathcal S^\star)
-G_{\mathcal A}(\mathcal S_{i-1})}{k}.
\label{eq:app-greedy-recursion}
\end{equation}
Define the remaining gap
\begin{equation}
E_i=G_{\mathcal A}(\mathcal S^\star)-G_{\mathcal A}(\mathcal S_i).
\end{equation}
Equation~\eqref{eq:app-greedy-recursion} gives
\begin{equation}
E_i\le\left(1-\frac{1}{k}\right)E_{i-1}.
\end{equation}
Recursing for $k$ steps and using $G_{\mathcal A}(\varnothing)=0$ yields
\begin{equation}
E_k\le
\left(1-\frac{1}{k}\right)^k
G_{\mathcal A}(\mathcal S^\star).
\end{equation}
Therefore,
\begin{align}
G_{\mathcal A}(\mathcal S_k)
\ge
\left[1-\left(1-\frac{1}{k}\right)^k\right]
G_{\mathcal A}(\mathcal S^\star)
\ge
\left(1-\frac{1}{e}\right)
G_{\mathcal A}(\mathcal S^\star),
\end{align}
which is the guarantee stated in Corollary~\ref{cor:greedy-completion}. This result compares greedy completion only with the optimal completion of the same fixed anchors over the same fixed admissible pool.

\subsection{Frozen-Oracle Benchmark Construction}
\label{app:oracle-construction}

The frozen-oracle protocol separates response-surface construction from closed-loop acquisition evaluation. Original dataset labels are used to select descriptors, train candidate regression models, and assess held-out quality. Once an oracle is selected, its predictions are frozen and used as the benchmark response for both existing reference rows and newly generated candidates. The closed-loop algorithms never alternate between original labels and oracle predictions.

\subsubsection{UCI Superconductivity Benchmark}

The source table contains 21,263 rows, 81 numerical composition descriptors, and the critical-temperature target. Rows containing missing numerical values are removed; no rows were removed in the downloaded table. An 80/20 split with random seed 42 is formed before descriptor selection.

Each descriptor is scored using three training-only statistics: absolute Spearman correlation with the target, mutual information, and Extra-Trees feature importance. Each statistic is converted to a percentile rank and the three ranks are averaged. The selector prioritizes a prespecified list of physically interpretable descriptors and excludes a candidate if its absolute Spearman correlation with any already selected descriptor exceeds 0.95. The selected five descriptors are
\begin{enumerate}
\item \texttt{wtd\_mean\_Valence},
\item \texttt{wtd\_std\_ThermalConductivity},
\item \texttt{wtd\_std\_atomic\_radius},
\item \texttt{wtd\_entropy\_atomic\_mass}, and
\item \texttt{wtd\_std\_fie}.
\end{enumerate}

Three oracle candidates are trained on the selected descriptors: a random forest with 800 trees, an Extra-Trees regressor with 800 trees, and histogram gradient boosting with 800 iterations, learning rate 0.04, and 31 leaf nodes. Models are ordered primarily by held-out Spearman correlation and secondarily by $R^2$. Table~\ref{tab:app-super-oracles} reports the held-out metrics. Extra Trees is selected as the frozen oracle.

\begin{table}[t]
\centering
\caption{Held-out superconductivity oracle performance.}
\label{tab:app-super-oracles}
\begin{tabular}{lrrrr}
\toprule
Model & $R^2$ & RMSE & MAE & Spearman $\rho$\\
\midrule
Extra Trees & 0.9128 & 10.0184 & 5.6564 & 0.9556\\
Random forest & 0.9095 & 10.2057 & 5.8667 & 0.9535\\
Hist. gradient boosting & 0.8932 & 11.0899 & 6.8611 & 0.9352\\
\bottomrule
\end{tabular}
\end{table}

The superconductivity discovery threshold is the $0.85$ quantile of the frozen-oracle predictions over the reference dataset, which equals $T=80.0$ in the reported benchmark.

\subsubsection{JARVIS CFID Benchmark}

The JARVIS CFID table contains 55,723 rows. Expanding the stored CFID vectors produces 1,543 candidate numerical descriptor columns. The original formation energy per atom is transformed as
\begin{equation}
y=-\mathrm{formation\_energy\_per\_atom},
\end{equation}
so that both benchmarks use a greater-than target. The data are divided into an 80/20 train/test split with random seed 42. Missing descriptor values are handled by median imputation inside the oracle pipelines.

Descriptor scores again average percentile ranks of absolute Spearman correlation, mutual information, and Extra-Trees importance. To control feature-selection cost, scoring uses at most 20,000 training rows. The 250 highest-scoring descriptors are considered sequentially, and candidates with absolute Spearman correlation greater than 0.98 with an already selected descriptor are excluded. The selected descriptors are
\begin{quote}\small
\texttt{cfid\_0048}, \texttt{cfid\_0124}, \texttt{cfid\_0240},
\texttt{cfid\_0168}, \texttt{cfid\_0075}, \texttt{cfid\_0384},
\texttt{cfid\_0405}, \texttt{cfid\_0324}, \texttt{cfid\_0236},
\texttt{cfid\_0189}, \texttt{cfid\_0345}, \texttt{cfid\_0134},
\texttt{cfid\_0361}, \texttt{cfid\_0505}, and \texttt{cfid\_0183}.
\end{quote}

The candidate oracles are a 700-tree Extra-Trees regressor, a 500-tree random forest, and histogram gradient boosting with 700 iterations, learning rate 0.04, and 31 leaf nodes. Each pipeline uses median imputation. Table~\ref{tab:app-jarvis-oracles} reports the held-out metrics. The random forest is selected as the frozen JARVIS oracle.

\begin{table}[t]
\centering
\caption{Held-out JARVIS oracle performance.}
\label{tab:app-jarvis-oracles}
\begin{tabular}{lrrrr}
\toprule
Model & $R^2$ & RMSE & MAE & Spearman $\rho$\\
\midrule
Random forest & 0.8375 & 0.4320 & 0.2518 & 0.9121\\
Extra Trees & 0.8311 & 0.4404 & 0.2531 & 0.9088\\
Hist. gradient boosting & 0.8239 & 0.4498 & 0.2828 & 0.9002\\
\bottomrule
\end{tabular}
\end{table}

\subsection{Synthetic Failure Mechanisms}
\label{app:failure-mechanisms}

The source datasets provide target properties but not controlled experimental-failure labels. We therefore apply a deterministic synthetic failure mask to the frozen-oracle response. All mechanisms are target independent: they depend on descriptor-space geometry and hidden feature interactions, while the frozen oracle separately determines the raw target value.

Let $x$ be robustly standardized using the component-wise median and interquartile range of the full reference feature matrix. If an interquartile range is numerically zero, the standard deviation is used, followed by a fallback scale of one. Denote the standardized vector by $\bar x\in\mathbb R^d$. For every experimental condition, three random unit vectors $w_1,w_2,w_3$ are generated from its deterministic seed, and
\begin{equation}
z_j=\bar x^\top w_j,\qquad j=1,2,3.
\end{equation}

\subsubsection{Off-Manifold Failure}

A ten-nearest-neighbor model is fitted to the robustly standardized reference data, and $d_{\mathrm{man}}(x)$ is the mean neighbor distance. Let $d_{0.8}$ be the $80$th percentile and $s_d$ the standard deviation of reference distances. The off-manifold risk is
\begin{equation}
r_{\mathrm{dist}}(x)=
\operatorname{sigm}\left(
\frac{d_{\mathrm{man}}(x)-d_{0.8}}{s_d}
\right).
\end{equation}

\subsubsection{Low-Validity-Probability Failure}

The smoothly varying nonlinear risk is
\begin{equation}
r_{\mathrm{nonlin}}(x)=
\operatorname{sigm}\left(
0.7\sin z_1+0.3\cos z_2+
0.15\frac{z_1z_2}{\sqrt d}
\right).
\end{equation}
This mechanism creates a hidden validity landscape that is not reducible to distance from the reference manifold.

\subsubsection{Feature-Interaction Failure}

The interaction mechanism uses
\begin{equation}
r_{\mathrm{int}}(x)=
\operatorname{sigm}\left(
0.70\frac{z_1z_2}{\sqrt d}
+0.20|z_3|
+0.10\max\{z_1-z_2,0\}
\right).
\end{equation}
It represents a hidden process window governed by combinations of descriptor directions rather than by a single coordinate.

\subsubsection{Hybrid Failure}

The hybrid risk is the weighted average
\begin{equation}
r_{\mathrm{hyb}}(x)=
0.60r_{\mathrm{dist}}(x)
+0.25r_{\mathrm{nonlin}}(x)
+0.15r_{\mathrm{int}}(x).
\end{equation}
The weights sum to one.

\subsubsection{Calibration to a Nominal Invalid Rate}

For any selected risk function $r(x)$ and nominal invalid rate $\rho\in\{0.2,0.3,0.4\}$, the deterministic threshold is
\begin{equation}
\theta_\rho=Q_{1-\rho}
\bigl(\{r(x):x\in\mathcal X_{\mathrm{ref}}\}\bigr).
\end{equation}
A queried candidate is invalid when $r(x)\ge\theta_\rho$. Its raw frozen-oracle response is retained for diagnostics, but the observed target is set to missing and the validity label is set to zero. Calibration is performed separately for every dataset, mechanism, nominal invalid rate, and seed. Because empirical quantiles can contain ties, the achieved reference invalid rate is approximately, rather than necessarily exactly, $\rho$.

The four synthetic failure mechanisms represent qualitatively different sources of experimental invalidity. Low-validity failure creates nonlinear regions of design space in which experiments are intrinsically less likely to succeed. Feature-interaction failure represents hidden process constraints that arise from combinations of otherwise individually plausible descriptors. Off-manifold failure penalizes extrapolative candidates that lie far from the empirical distribution of known designs. Finally, the hybrid mechanism combines these effects to represent experimental environments in which invalidity may arise from multiple causes. Together, these mechanisms test whether an acquisition strategy remains robust to local feasibility structure, variable interactions, extrapolation risk, and their combination.

\subsection{Experimental Protocol and Statistical Analysis}
\label{app:experimental-details}

\subsubsection{Holdout-Seed Evaluation Grid}

The results reported in the main paper use the seed IDs $5,\ldots,14$; Each dataset combines four failure mechanisms, three nominal invalid rates, three initialization quantiles, and ten seed IDs, $4\times 3\times 3\times 10=360$ paired evaluation cases. Each run begins with $n_0=50$ reference designs sampled without replacement from rows whose frozen-oracle response lies below the specified initialization quantile ($0.65$, $0.75$, or $0.85$), followed by five closed-loop rounds of batch size five.

For the shared uniform-pool protocol, ARC-SC and \textsc{POF} receive the same initial data, frozen oracle, failure mechanism, target threshold, candidate pool, and model seeds. The candidate pool at round $t$ is generated once from the common condition seed and then passed to both methods. For the JARVIS trust-region mechanisms, the initial condition and failure environment remain paired, but candidate pools are generated from each method's own observed trajectory as described in Appendix~\ref{app:jarvis-trust-region}.

The first-hit metric is computed only over newly queried rounds 1--5. Initial target-level observations, when present, are recorded diagnostically but do not count as a closed-loop first hit. This matters for JARVIS when the initialization quantile is $0.85$ while the target quantile is $0.75$.

\subsubsection{Primary and Secondary Endpoints}

Let $H=5$ denote the closed-loop horizon. The censored first-hit round is
\begin{equation}
\widetilde{\tau}_{\mathrm{hit}}
=
\begin{cases}
\tau_{\mathrm{hit}}, & \text{if a newly queried hit is found},\\
H+1=6, & \text{otherwise}.
\end{cases}
\end{equation}
The paired first-hit effect is
\begin{equation}
\Delta\widetilde{\tau}_{\mathrm{hit}}
=
\widetilde{\tau}_{\mathrm{hit}}^{\mathrm{ARC\text{-}SC}}
-
\widetilde{\tau}_{\mathrm{hit}}^{\mathrm{POF}},
\end{equation}
so negative values favor ARC-SC. A paired run is an earlier-hit win, tie, or later-hit loss according to whether this difference is negative, zero, or positive.

Secondary effects are
\begin{equation}
\Delta N_{\mathrm{hit}}
=
N_{\mathrm{hit}}^{\mathrm{ARC\text{-}SC}}
-
N_{\mathrm{hit}}^{\mathrm{POF}},
\end{equation}
and
\begin{equation}
\Delta r_{\mathrm{invalid}}
=
r_{\mathrm{invalid}}^{\mathrm{ARC\text{-}SC}}
-
r_{\mathrm{invalid}}^{\mathrm{POF}},
\end{equation}
where positive $\Delta N_{\mathrm{hit}}$ and negative $\Delta r_{\mathrm{invalid}}$ favor ARC-SC.

\subsubsection{Exact Sign Test and Hierarchical Bootstrap}

For the sign test, ties are removed and the number of ARC-SC wins among the discordant pairs is tested against a $\operatorname{Binomial}(W+L,0.5)$ null model. The main paper reports a two-sided exact $p$-value and an exact $95\%$ confidence interval for the conditional win fraction $W/(W+L)$.

To quantify effect-size uncertainty while respecting the factorial design, we use a two-level hierarchical bootstrap with 10,000 replicates. The 36 experimental-condition cells are defined by failure mechanism, nominal invalid rate, and initialization quantile. Each bootstrap replicate (i) samples 36 condition cells with replacement and then (ii) resamples the ten paired seed-level observations with replacement within each sampled cell. The statistic is computed within each resampled cell and then averaged across the sampled cells. This procedure is used for the paired first-hit, cumulative-hit, invalid-rate, and discovery-by-round intervals. It therefore reflects variation over both the prespecified condition grid and stochastic repeats rather than treating all 360 rows as homogeneous IID observations.

\end{document}

%% file: math_commands.tex
\usepackage{amsmath,amsfonts,bm}

\def\eqref#1{equation~\ref{#1}}

\def\1{\bm{1}}

\DeclareMathAlphabet{\mathsfit}{\encodingdefault}{\sfdefault}{m}{sl}
\SetMathAlphabet{\mathsfit}{bold}{\encodingdefault}{\sfdefault}{bx}{n}

%% file: references.bib
@article{hickman2025anubis,
  title={Anubis: Bayesian optimization with unknown feasibility constraints for scientific experimentation},
  author={Hickman, Riley J and Tom, Gary and Zou, Yunheng and Aldeghi, Matteo and Aspuru-Guzik, Al{\'a}n},
  journal={Digital Discovery},
  volume={4},
  number={8},
  pages={2104--2122},
  year={2025},
  publisher={The Royal Society of Chemistry}
}

@article{nemhauser1978analysis,
  title={An analysis of approximations for maximizing submodular set functions—I},
  author={Nemhauser, George L and Wolsey, Laurence A and Fisher, Marshall L},
  journal={Mathematical programming},
  volume={14},
  number={1},
  pages={265--294},
  year={1978},
  publisher={Springer}
}

@article{azimi2010batch,
  title={Batch Bayesian optimization via simulation matching},
  author={Azimi, Javad and Fern, Alan and Fern, Xiaoli},
  journal={Advances in neural information processing systems},
  volume={23},
  year={2010}
}

@article{shah2015parallel,
  title={Parallel predictive entropy search for batch global optimization of expensive objective functions},
  author={Shah, Amar and Ghahramani, Zoubin},
  journal={Advances in neural information processing systems},
  volume={28},
  year={2015}
}

@inproceedings{gonzalez2016batch,
  title={Batch Bayesian optimization via local penalization},
  author={Gonz{\'a}lez, Javier and Dai, Zhenwen and Hennig, Philipp and Lawrence, Neil},
  booktitle={Artificial intelligence and statistics},
  pages={648--657},
  year={2016},
  organization={PMLR}
}

@article{kathuria2016batched,
  title={Batched gaussian process bandit optimization via determinantal point processes},
  author={Kathuria, Tarun and Deshpande, Amit and Kohli, Pushmeet},
  journal={Advances in neural information processing systems},
  volume={29},
  year={2016}
}

@article{wu2016parallel,
  title={The parallel knowledge gradient method for batch Bayesian optimization},
  author={Wu, Jian and Frazier, Peter},
  journal={Advances in neural information processing systems},
  volume={29},
  year={2016}
}

@inproceedings{gardner2014bayesian,
  title={Bayesian optimization with inequality constraints.},
  author={Gardner, Jacob R and Kusner, Matt J and Xu, Zhixiang Eddie and Weinberger, Kilian Q and Cunningham, John P},
  booktitle={ICML},
  volume={2014},
  pages={937--945},
  year={2014}
}

@inproceedings{hernandez2015predictive,
  title={Predictive entropy search for Bayesian optimization with unknown constraints},
  author={Hern{\'a}ndez-Lobato, Jos{\'e} Miguel and Gelbart, Michael and Hoffman, Matthew and Adams, Ryan and Ghahramani, Zoubin},
  booktitle={International conference on machine learning},
  pages={1699--1707},
  year={2015},
  organization={PMLR}
}

@inproceedings{tian2024boundary,
  title={Boundary exploration for Bayesian optimization with unknown physical constraints},
  author={Tian, Yunsheng and Zuniga, Ane and Zhang, Xinwei and D{\"u}rholt, Johannes P and Das, Payel and Chen, Jie and Matusik, Wojciech and Lukovi{\'c}, Mina Konakovi{\'c}},
  booktitle={Proceedings of the 41st International Conference on Machine Learning},
  pages={48295--48320},
  year={2024}
}

@inproceedings{bergmann2020safe,
  title={Safe Bayesian optimization under unknown constraints},
  author={Bergmann, Daniel and Graichen, Knut},
  booktitle={2020 59th IEEE Conference on Decision and Control (CDC)},
  pages={3592--3597},
  year={2020},
  organization={IEEE}
}

@article{wakabayashi2022bayesian,
  title={Bayesian optimization with experimental failure for high-throughput materials growth},
  author={Wakabayashi, Yuki K and Otsuka, Takuma and Krockenberger, Yoshiharu and Sawada, Hiroshi and Taniyasu, Yoshitaka and Yamamoto, Hideki},
  journal={npj Computational Materials},
  volume={8},
  number={1},
  pages={180},
  year={2022},
  publisher={Nature Publishing Group UK London}
}

@inproceedings{iwazaki2025no,
  title={No-Regret Bayesian Optimization with Stochastic Observation Failures},
  author={Iwazaki, Shogo and Tanabe, Tomohiko and Irie, Mitsuru and Takeno, Shion and Matsui, Kota and Inatsu, Yu},
  booktitle={International Conference on Artificial Intelligence and Statistics},
  pages={415--423},
  year={2025},
  organization={PMLR}
}

@inproceedings{garnett2012bayesian,
  title={Bayesian optimal active search and surveying},
  author={Garnett, Roman and Krishnamurthy, Yamuna and Xiong, Xuehan and Schneider, Jeff and Mann, Richard},
  booktitle={Proceedings of the 29th International Coference on International Conference on Machine Learning},
  pages={843--850},
  year={2012}
}

@inproceedings{jiang2017efficient,
  title={Efficient nonmyopic active search},
  author={Jiang, Shali and Malkomes, Gustavo and Converse, Geoff and Shofner, Alyssa and Moseley, Benjamin and Garnett, Roman},
  booktitle={International Conference on Machine Learning},
  pages={1714--1723},
  year={2017},
  organization={PMLR}
}

@inproceedings{chen2013near,
  title={Near-optimal batch mode active learning and adaptive submodular optimization},
  author={Chen, Yuxin and Krause, Andreas},
  booktitle={International Conference on Machine Learning},
  pages={160--168},
  year={2013},
  organization={PMLR}
}

@article{lookman2019active,
  title={Active learning in materials science with emphasis on adaptive sampling using uncertainties for targeted design},
  author={Lookman, Turab and Balachandran, Prasanna V and Xue, Dezhen and Yuan, Ruihao},
  journal={npj Computational Materials},
  volume={5},
  number={1},
  pages={21},
  year={2019},
  publisher={Nature Publishing Group UK London}
}

@article{xue2016accelerated,
  title={Accelerated search for materials with targeted properties by adaptive design},
  author={Xue, Dezhen and Balachandran, Prasanna V and Hogden, John and Theiler, James and Xue, Deqing and Lookman, Turab},
  journal={Nature communications},
  volume={7},
  number={1},
  pages={11241},
  year={2016},
  publisher={Nature Publishing Group}
}

@article{kusne2020fly,
  title={On-the-fly closed-loop materials discovery via Bayesian active learning},
  author={Kusne, A Gilad and Yu, Heshan and Wu, Changming and Zhang, Huairuo and Hattrick-Simpers, Jason and DeCost, Brian and Sarker, Suchismita and Oses, Corey and Toher, Cormac and Curtarolo, Stefano and others},
  journal={Nature communications},
  volume={11},
  number={1},
  pages={5966},
  year={2020},
  publisher={Nature Publishing Group UK London}
}

@misc{superconductivty_data_464,
  author       = {Hamidieh, Kam},
  title        = {{Superconductivty Data}},
  year         = {2018},
  howpublished = {UCI Machine Learning Repository},
  note         = {{DOI}: https://doi.org/10.24432/C53P47}
}

@article{choudhary2020joint,
  title={The joint automated repository for various integrated simulations (JARVIS) for data-driven materials design},
  author={Choudhary, Kamal and Garrity, Kevin F and Reid, Andrew CE and DeCost, Brian and Biacchi, Adam J and Hight Walker, Angela R and Trautt, Zachary and Hattrick-Simpers, Jason and Kusne, A Gilad and Centrone, Andrea and others},
  journal={npj computational materials},
  volume={6},
  number={1},
  pages={173},
  year={2020},
  publisher={Nature Publishing Group UK London}
}

@article{wu2026matformbench,
  title={MatFormBench: A Benchmarking Evaluation Framework for Target-Driven Materials Formulation},
  author={Wu, Linhan and Wang, Chenxi and Yang, Chuhan and Yang, Zhengwei and Liu, Yuyang},
  journal={arXiv preprint arXiv:2605.26741},
  year={2026}
}

@article{eriksson2019scalable,
  title={Scalable global optimization via local Bayesian optimization},
  author={Eriksson, David and Pearce, Michael and Gardner, Jacob and Turner, Ryan D and Poloczek, Matthias},
  journal={Advances in neural information processing systems},
  volume={32},
  year={2019}
}

@article{papenmeier2022increasing,
  title={Increasing the scope as you learn: Adaptive bayesian optimization in nested subspaces},
  author={Papenmeier, Leonard and Nardi, Luigi and Poloczek, Matthias},
  journal={Advances in Neural Information Processing Systems},
  volume={35},
  pages={11586--11601},
  year={2022}
}

@article{volk2024performance,
  title={Performance metrics to unleash the power of self-driving labs in chemistry and materials science},
  author={Volk, Amanda A and Abolhasani, Milad},
  journal={Nature communications},
  volume={15},
  number={1},
  pages={1378},
  year={2024},
  publisher={Nature Publishing Group UK London}
}

@article{tom2024self,
  title={Self-driving laboratories for chemistry and materials science},
  author={Tom, Gary and Schmid, Stefan P and Baird, Sterling G and Cao, Yang and Darvish, Kourosh and Hao, Han and Lo, Stanley and Pablo-Garc{\'\i}a, Sergio and Rajaonson, Ella M and Skreta, Marta and others},
  journal={Chemical Reviews},
  volume={124},
  number={16},
  pages={9633},
  year={2024}
}

@article{wang2024bayesian,
  title={Bayesian Occam’s Razor to Optimize Metamodeling for Complex Biological Systems},
  author={Wang, Chenxi and Zhao, Jihui and Zheng, Jingjing and Raveh, Barak and He, Xuming and Sun, Liping},
  journal={bioRxiv},
  pages={2024--05},
  year={2024},
  publisher={Cold Spring Harbor Laboratory}
}
